\newtheorem{lemma}{Lemma}
\newtheorem{proposition}{Proposition}
\newtheorem{remark}{Remark}
\newtheorem{notation}{Notation}
\newcommand{\RM}{\mathbb{R}}
\newcommand{\CM}{\mathbb{C}}
\newcommand{\MM}{\,\mbox{\bf M}}
\newcommand{\cn}{\operatorname{cn}}
\newtheorem{thm}{Theorem}
\newtheorem{lem}{Lemma}
\newtheorem{prop}{Proposition}
\newtheorem{df}{Definition}
\begin{document}
\bibliographystyle{plain}
\title{An index theorem for the stability of periodic traveling waves
of KdV type.}
\author{Jared C. Bronski\thanks{Department of Mathematics, University of Illinois, 1409 W. Green St. Urbana, IL 61801 USA }\and Mathew A. Johnson\thanks{Department of Mathematics, Indiana University,831 East 3rd St, Bloomington, IN  47405 USA } \and Todd Kapitula\thanks{Department of Mathematics and Statistics, Calvin College, 1740 Knollcrest Circle SE, Grand Rapids, MI  49546 USA }}

\maketitle

\begin{abstract}
There has been a large amount of work aimed at understanding the
stability of nonlinear dispersive equations that support
solitary wave solutions\cite{GSS,BSS,Ben,Bona,Gard,PW1,PW2,Row,MIW1,MIW2}.
Much of this work relies on understanding detailed properties of the spectrum
of the operator obtained by linearizing the flow around the solitary wave.
These spectral properties, in turn, have important implications for the
long-time behavior of solutions to the corresponding partial differential
equation\cite{BJ,BP,DS,GSi1,GSi,KS,KZ,MMI,MMII,FMP,S,SW,SWI}.

In this paper we consider periodic solutions to equations of
Korteweg-Devries type. While the stability theory for periodic
waves has received much some attention\cite{AP,BD,APBS,BR,GH1,GH2,HK,DK} the
theory is much less developed than the analogous theory for solitary wave
stability, and appears to be mathematically richer. We prove an
index theorem  giving
an exact count of the number of unstable eigenvalues of the
linearized operator in terms of the number of zeros of the derivative
of the traveling wave profile together with geometric information about a
certain map between the constants of integration of the ordinary 
differential equation and the conserved quantities of the partial 
differential equation.

 This index can be regarded as a generalization of 
both the Sturm
oscillation theorem and the classical stability theory for solitary wave
solutions for equations of Korteweg-de Vries type.
In the case of a polynomial nonlinearity this index, together with a
related one introduced earlier by Bronski and Johnson,
can be expressed in terms of derivatives of period integrals on a Riemann
surface. Since these period integrals satisfy a Picard-Fuchs equation
these derivatives can be expressed in terms of the integrals themselves,
leading to an expression in terms of various moments of the solution.
We conclude with some illustrative examples.

\end{abstract}

\section{Introduction and  Preliminaries}

In the area of nonlinear dispersive waves the question of stability
is an important one, as it determines what states one is likely to observe
in practice. In this paper we consider periodic traveling wave solutions
to equations of KdV type:
\begin{equation}
u_t + u_{xxx} + (f(u))_x = 0\label{gkdv}
\end{equation}
where $f$ is assumed to be $C^2.$  The main goal of this paper is to determine
an index expressible in terms of the conserved quantities restricted to the manifold
of periodic traveling wave solutions which yields sufficient information for the
orbital and spectral stability of the underlying wave.  Our results are geometric
in nature as they relate to Jacobians of various maps which relate naturally
to the structure of the underlying linearized operator, and will be most explicit when $f$ is a polynomial.
Note that when $f$ is a polynomial of
degree at most three the above is integrable via the inverse scattering
transform, but this is not the case for polynomials of
higher degree.

Assuming a traveling wave of the form $u = u(x - ct)$ one is
immediately led to the following nonlinear oscillator
equation
\begin{equation}
\frac{u_x^2}{2} - \frac{c u^2}{2} + a u + F(u) = E
\label{eqn:ODE}
\end{equation}
where $F$ is the antiderivative of the nonlinearity $f.$ Thus the
periodic waves  depend on three parameters $a,E,c$ together with
a fourth constant of integration $x_0$ corresponding to the
translation mode which can be modded out. Thus when we speak of a
three parameter family of solutions we will be refering to $a,E,c$.
On open sets in ${\mathbb{R}}^3 = (a,E,c)$ the solution to the
above is periodic. The boundary of this locus of points where
the discriminant vanishes, which includes the constant solutions and
the solitary waves.

The constants $a$ and $c$ admit a variational interpretation: defining the
Hamiltonian function
\[
H = \int \frac{u_x^2}{2} - F(u) dx
\]
so that the Korteweg-DeVries equation can be written
\[
u_t = \frac{\partial}{\partial x} \delta H(u)
\]
then the traveling waves are critical points of the augmented Hamiltonian
functional
\[
\delta (H - a M - c P/2) = 0
\]
and thus represent critical points of the Hamiltonian under the constraint
of fixed mass and momentum, with the quantities $a,c$ representing Lagrange
multipliers enforcing the constraints of fixed mass and momentum respectively.

We also note the connection with the underlying classical mechanics.
The traveling wave ordinary differential equation is (after one integration)
Hamiltonian and integrable. The classical action for this ordinary differential 
equation is
\[
K = \oint p dq = \sqrt{2} \oint \sqrt{E + a u + \frac{cu^2}{2} - F(u)} du.
\]
The classical action provides a generating function for the
conserved quantities of the traveling waves: specifically the classical
action satisfies the following relationships
\begin{align}
& T = \frac{\partial K}{\partial E} \\
& M = \frac{\partial K}{\partial a} \\
& P = 2 \frac{\partial K}{\partial c}.
\end{align}

In this paper, we are interested in both the spectral and orbital (nonlinear) stability
of spatially periodic traveling wave solutions of \eqref{gkdv}.  The spectral stability
problems has been recently considered \cite{BD,BrJ,HK} in which the authors considered
stability to localized perturbations.
In this case, the linearized stability takes the form
\begin{equation}
{\bf J} {\mathcal L}\mu v = \mu v,\label{eqn:operator}
\end{equation}
where $\mathcal{L}=-\partial_x^2-f'(u)+c$ is a differential operator with periodic coefficients
considered on the real Hilbert space $L^2(\RM)$ and ${\bf J}=\partial_x$ gives the Hamiltonian 
structure.  This is the standard form for the stability problem
for solutions to equations with a Hamiltonian structure, although it must
be emphasized that in the KdV case ${\bf J}=\partial_x$ has a non-trivial kernel (spanned by $1$) which
complicates matters somewhat.  

\begin{notation}
Throughout this paper $T$ will denote the period of the underlying 
periodic traveling wave. We will let 
\[
{\mathbb T}_k= {\mathbb R}/(k T {\mathbb Z})
\]
 be the torus of length $kT$ and $L^2({\mathbb T}_k)$, the Hilbert 
space of square integrable $kT$ periodic functions. We will frequently 
restrict to the subspace of mean zero functions. Following the 
notation of Deconinck and Kapitula we will denote this
subspace by $H_1:$
\[
H_1 = \{ \phi \in L^2({\mathbb T}_k) ~| ~\langle1,\phi\rangle = 0 \}. 
\]

In this paper we give a geometric construction of the spectrum of the
linearized operator about a periodic traveling wave solution.
This construction depends on the Jacobian determinants of various maps.
Throughout this paper we will use the following Poisson bracket style notation
for Jacobian determinants
\[
\{F,G\}_{x,y} = \left|\begin{array}{cc}F_x & F_y \\ G_x & G_y \end{array}\right|
\]
with the analogous notation for larger Jacobian determinants:
\[
\{F,G,H\}_{x,y,z} = \left|\begin{array}{ccc}F_x & F_y & F_z\\ G_x & G_y & G_z \\
H_x & H_y & H_z\end{array}\right|.
\]
\end{notation}

We will also define the following eigenvalue counts:

\begin{df}
Given the linearized operator $\partial_x\mathcal{L}$ acting on $L^2_{\rm per}({\mathbb T}_k)$
we define $k_{\RM}$ to be the number of real strictly positive eigenvalues, $k_{\CM}$ to be the number of 
complex-valued eigenvalues with strictly positive real part, and $k_{\mathbb{I}}^-$ to be the number of purely
imaginary (non-zero) eigenvalues with negative Krein signature - in other words
eigenvalues such that the corresponding eigenfunctions $w$ satisfy
\[
\langle w, {\mathcal L} w\rangle <0.
\]
\end{df}

It is worth making a few remarks on this definition. The Krein signature is an important geometric quantity
associated with eigenvalue problems having a Hamiltonian structure\cite{YS}, and is associated with the
sense of transversality of the root of the eigenvalue relation. It is a fundamental result that two 
eigenvalues of like Krein signature collide they will remain on the axis, while if two eigenvalues of 
opposite Krein signature collide they will (generically) leave the imaginary axis. It can be shown that  a band of spectrum on the imaginary axis with multiplicity one will have eigenvalues of positive 
Krein signature, while a band of spectrum of multiplicity three will have two eigenvalues of positive 
Krein signature and one eigenvalue of negative Krein signature. It is not hard to check that outside of 
a sufficiently large ball in the spectral plane the spectrum lies on the imaginary axis and 
has multiplicity one: it follows from this that the non-imaginary eigenvalues and imaginary 
eigenvalues of negative Krein signature (being the roots of an analytic function) must be finite 
in number. Also notice that by symmetry of the spectrum
about the real and imaginary axes, the quantities $k_{\CM}$ and $k_{\mathbb{I}}^-$ are necessarily even, while the
quantity $k_{\RM}$ has no definite parity.  Our immediate goal is to establish the following index theorem, which is the main
result of the paper:

\begin{thm}\label{thm:index}
Suppose $u$ is a periodic solution to \eqref{eqn:ODE} and
$T,M,P$ be the period, mass, and momentum of this solution
considered as functions of the
parameters $(a,E,c)$:
 \begin{align}
& T = \oint \frac{du}{\sqrt{2(E + a u + \frac{c}{2} u^2 - F(u))}} \label{period}\\
& M = \oint \frac{u du}{\sqrt{2(E + a u + \frac{c}{2} u^2 - F(u))}} \label{mass}\\
& P = \oint \frac{u^2 du}{\sqrt{2(E + a u + \frac{c}{2} u^2 - F(u))}}.\label{momentum}
\end{align}
Also let $k_{\mathbb R},k_{\mathbb C},k_{\mathbb I}^-$ be the number of real, complex and imaginary eigenvalues of 
negative Krein signature of  $\partial_x {\mathcal L}$
on $L^2_{\rm per}({\mathbb T}_k)$ as defined above. Finally suppose that none of $T_E, \{T,M\}_{a,E},
\{T,M,P\}_{a,E,c}$ are zero. Then we have the following equality
\begin{equation}
k_{\mathbb I}^-+k_{\mathbb{R}}+k_{\mathbb C}=2k-1 + \left\{\begin{array}{c} 0 ~~T_E>0 \\ 1 ~~T_E<0 \end{array}\right\} - \left\{\begin{array}{c} 0 ~~\frac{\left\{T,M\right\}_{a,E}}{T_E}>0 \\ 1 ~~\frac{\left\{T,M\right\}_{a,E}}{T_E}<0
\end{array}\right\}
-  \left\{\begin{array}{c} 0 ~~\frac{\left\{T,M,P\right\}_{a,E,c}}{\left\{T,M\right\}_{a,E}}<0 \\ 1 ~~\frac{\left\{T,M,P\right\}_{a,E,c}}{\left\{T,M\right\}_{a,E}}>0
\end{array}
\right\}\label{index}
\end{equation}
\end{thm}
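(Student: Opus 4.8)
\emph{Proof strategy.} The plan is to combine an abstract Hamiltonian--Krein index identity, which reduces the left side of \eqref{index} to a difference of two finite Morse counts, with two explicit computations: one of the negative index of $\mathcal L$ via Sturm--Liouville oscillation theory and the period function $T$, and one of a finite-dimensional constraint matrix via the action relations $T=K_E$, $M=K_a$, $P=2K_c$ together with the profile identities obtained by differentiating the traveling-wave relation \eqref{eqn:ODE} in the parameters.

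\emph{Step 1 (reduction to $H_1$ and the index identity).} Any eigenfunction of $\partial_x\mathcal L$ for an eigenvalue $\mu\neq0$ is mean zero, $v=\mu^{-1}\partial_x\mathcal L v\in H_1$, so the counts $k_{\mathbb R},k_{\mathbb C},k_{\mathbb I}^-$ are unchanged if $\partial_x\mathcal L$ is regarded as acting on $H_1$, where $\mathbf J=\partial_x$ is a bounded bijection with bounded inverse. On this space one invokes the generalized Hamiltonian--Krein index theorem for $\mathbf J\mathcal L$ with $\mathbf J$ skew-adjoint and invertible, in the periodic formulation of Deconinck and Kapitula, to obtain
\[
k_{\mathbb R}+k_{\mathbb C}+k_{\mathbb I}^-=n(\mathcal L_{H_1})-n(\mathcal D),
\]
where $n(\mathcal L_{H_1})$ is the number of negative eigenvalues of the symmetric form $v\mapsto\langle\mathcal L v,v\rangle$ over $v\in H_1$, and $\mathcal D$ is the finite symmetric matrix of the form $\langle\mathcal L_{H_1}^{-1}\cdot,\cdot\rangle$ evaluated on the generalized kernel directions of $\partial_x\mathcal L$ lying in $H_1$. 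The hypothesis $T_E\neq0$ forces $\ker\mathcal L=\operatorname{span}\{u_x\}$ on every $L^2(\mathbb T_k)$ (the second solution $\partial_E u$ of $\mathcal L v=0$ is genuinely secular, with period defect $-T_E u_x$), so the only such direction surviving in $H_1$ is $\widetilde u:=u-\langle u\rangle$, and $\mathcal D$ collapses to a scalar, a nonzero multiple of $\langle\mathcal L_{H_1}^{-1}\widetilde u,\widetilde u\rangle$.

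\emph{Step 2 (the negative index of $\mathcal L$).} On $\mathbb T_k=\mathbb R/(kT\mathbb Z)$ the kernel element $u_x$ has exactly $2k$ zeros, so by the Sturm oscillation theorem $0$ is either the $(2k-1)$st or the $(2k)$th eigenvalue of $\mathcal L$; which alternative holds is decided by whether $0$ sits at the bottom or the top of its spectral band, equivalently by the sign of the derivative of the Hill discriminant at $0$, which a Wronskian identity identifies with $-T_E$ up to a positive factor. Hence $n(\mathcal L|_{L^2(\mathbb T_k)})=2k-1$ if $T_E>0$ and $2k$ if $T_E<0$. To pass to $H_1$ one applies the constraint lemma for self-adjoint forms: $n(\mathcal L_{H_1})=n(\mathcal L|_{L^2(\mathbb T_k)})-1$ precisely when $\langle\mathcal L^{-1}1,1\rangle<0$, where $\mathcal L^{-1}1=\partial_a u-(T_a/T_E)\partial_E u$ is the unique $T$-periodic solution of $\mathcal L w=1$ (the correction cancels the secular part). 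Pairing this with $1$ and using $T=\oint du/\sqrt{2(\cdots)}=K_E$ and $M=\oint u\,du/\sqrt{2(\cdots)}=K_a$ shows $\langle\mathcal L^{-1}1,1\rangle$ has the same sign as $\{T,M\}_{a,E}/T_E$. Thus $n(\mathcal L_{H_1})$ is $2k-1$, plus the first bracketed correction in \eqref{index} coming from $T_E$, minus the second one coming from $\{T,M\}_{a,E}/T_E$.

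\emph{Step 3 (the constraint scalar, and assembly).} It remains to compute $n(\mathcal D)$, i.e. the sign of $\langle\mathcal L_{H_1}^{-1}\widetilde u,\widetilde u\rangle$. Since $\mathcal L\partial_c u=-u$, the combination $-\partial_c u-\langle u\rangle\,\mathcal L^{-1}1$ is a primitive of $\widetilde u$ for $\mathcal L$; one then adds a multiple of $\partial_E u$ to restore $T$-periodicity and a multiple of $\mathcal L^{-1}1$ from Step 2 to restore the mean-zero constraint, obtaining $\mathcal L_{H_1}^{-1}\widetilde u$. The coefficients are given by Cramer's rule applied to the $3\times3$ linear system whose coefficient matrix, via $T=K_E$, $M=K_a$, $P=2K_c$, is the Jacobian $\partial(T,M,P)/\partial(a,E,c)$; pairing the result against $\widetilde u$ and using the same identities together with $P=\oint u^2\,du/\sqrt{2(\cdots)}$ collapses $\langle\mathcal L_{H_1}^{-1}\widetilde u,\widetilde u\rangle$ to a nonzero multiple of $-\{T,M,P\}_{a,E,c}/\{T,M\}_{a,E}$, so that $n(\mathcal D)$ equals the last bracketed correction in \eqref{index}. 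Substituting the counts of Steps 2 and 3 into the identity of Step 1 gives \eqref{index}. The hard part is exactly the bookkeeping of Steps 2 and 3: peeling off the secular and non-mean-zero parts of $\partial_a u$ and $\partial_c u$ carefully enough, with the correct signs, that the inner products $\langle\mathcal L^{-1}1,1\rangle$ and $\langle\mathcal L_{H_1}^{-1}\widetilde u,\widetilde u\rangle$ become the stated Jacobian ratios, all while tracking the kernel of $\mathbf J=\partial_x$ through the abstract index theorem; this is precisely where the three nondegeneracy hypotheses are consumed, $T_E\neq0$ making $\ker\mathcal L$ minimal and the secular corrections well defined, $\{T,M\}_{a,E}\neq0$ making the constraint lemma apply cleanly, and $\{T,M,P\}_{a,E,c}\neq0$ making $\mathcal D$ nonsingular.
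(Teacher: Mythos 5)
Your proposal is correct, and its skeleton is the paper's: the Deconinck--Kapitula identity $k_{\mathbb R}+k_{\mathbb C}+k_{\mathbb I}^-=n(\mathcal L|_{H_1})-n(D)$, the count $n(\mathcal L)=2k-1$ or $2k$ from the oscillation theorem plus the sign of the Floquet-discriminant slope (i.e.\ of $T_E$), and the evaluation of the remaining two corrections from $\mathcal L u_E=0$, $\mathcal L u_a=-1$, $\mathcal L u_c=-u$ together with the secular (period-derivative) corrections. Where you genuinely depart from the paper is the comparison of $n(\mathcal L|_{H_1})$ with $n(\mathcal L)$: the paper (Proposition \ref{prop:count2}, completed only at the end of the proof of Lemma \ref{D}) argues by continuation --- a zero of $\{T,M\}_{a,E}$ signals an eigenvalue of $\mathcal L|_{H_1}$ crossing the origin, with $\lambda\approx -T_E\{T,M\}_{a,E}/\Vert\phi_1\Vert^2$ --- and settles the case where $\{T,M\}_{a,E}$ never vanishes only modulo parity, importing the Bronski--Johnson result that $\sgn\{T,M,P\}_{a,E,c}$ fixes $k_{\mathbb R}\bmod 2$. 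You instead invoke the standard negative-index lemma for a self-adjoint operator constrained to $\{1\}^{\perp}$ (legitimate since $1\perp\ker\mathcal L=\operatorname{span}\{u_x\}$ when $T_E\neq0$) and evaluate $\langle\mathcal L^{-1}1,1\rangle=\{T,M\}_{a,E}/T_E$ explicitly; this is more direct and self-contained (no homotopy in parameter space, no external parity input), at the cost of the bookkeeping you acknowledge. Your treatment of $n(\mathcal D)$ is the paper's Lemma \ref{D} in different clothing: the paper's generalized-kernel element satisfies $\phi_2=\pm\{T,M\}_{a,E}\,\mathcal L_{H_1}^{-1}\widetilde u$ modulo $u_x$, so $D=\{T,M\}_{a,E}^{2}\,\langle\mathcal L_{H_1}^{-1}\widetilde u,\widetilde u\rangle$, and your value $-\{T,M,P\}_{a,E,c}/\{T,M\}_{a,E}$ agrees with $D=-\{T,M\}_{a,E}\{T,M,P\}_{a,E,c}$. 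Three small repairs: since $\mathcal L u_a=-1$, the periodic solution of $\mathcal L w=1$ is $-u_a+(T_a/T_E)u_E$, the negative of your formula (pairing it with $1$ then does give $\{T,M\}_{a,E}/T_E$, the sign you actually use and the one \eqref{index} requires); in Steps 1 and 3 ``nonzero multiple'' must be ``positive multiple'' for the sign conclusions to transfer, which holds because the factor is the square $\{T,M\}_{a,E}^{2}$; and $\partial_x$ on $H_1$ is not bounded, only boundedly invertible --- which is all the Hamiltonian--Krein index machinery needs.
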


The count on the left hand side of \eqref{index} clearly gives information concerning
the spectral stability with perturbations in $L^2_{\rm per}({\mathbb T}_k)$.  In particular,
if the count is zero then one can conclude the underlying periodic wave is spectrally
stable to perturbations in $L^2_{\rm per}({\mathbb T}_k)$.  Moreover,
if the count is odd one is guaranteed the existence of a real eigenvalue since the quantities
$k_\CM$ and $k_\mathbb{I}^-$ are necessarily even.
What is less clear is the relation of this count to the orbital stability of such a solution
in $L^2_{\rm per}({\mathbb T}_k)$.  We will discuss this relationship in detail in the next section.
The right hand side of \eqref{index} relates to geometric information concerning the underlying
periodic traveling wave profile, and the equality provides a direct relationship between the conserved
quantities of the governing PDE flow restricted to the manifold of periodic traveling wave solutions to the
stability of the underlying wave itself.

In the case $F$ is polynomial the integrals \eqref{period}, \eqref{mass}, and \eqref{momentum} are Abelian integrals on a
Riemann surface and the above expressions can be greatly simplified. For
instance for the case of the Korteweg-de Vries equation the quantities
$T_E,\{T,M\}_{a,E},\{T,M,P\}_{a,E,c}$ are homogeneous polynomials of degrees
one, two and three resepctively in $T,M$, while for the modifed
Korteweg-de Vries equation
they are homogeneous polynomials in $T$ and $P$. In general for a polynomial
nonlinearity they are homogeneous polynomials of degree one, two and three in
some finite number of moments of the solution $I_k =\oint \frac{u^k du}{\sqrt{E + a u + \frac{c}{2} u^2 - F(u)}}.$
Thus, in the polynomial nonlinearity case Theorem \ref{thm:index} yields sufficient information
for the stability of a periodic traveling wave solution in terms of a finite number
of moments of the solution itself.

\section{Background and Main Results}

The study of eigenvalues of operators of the form \eqref{eqn:operator} has a
long history.  Eigenvalue problems of exactly this form arise in the study of the stability
of solitary wave solutions to equations of Korteweg-de Vries type. The basic
observation is that, if ${\mathcal L}$ were positive definite the spectrum
of ${\bf J}{\mathcal L}$ would necessarily be purely imaginary, since this
operator is skew-adjoint under the modified inner product
$\langle\langle u,v\rangle\rangle=\langle {\mathcal L}^{1/2}u,{\mathcal L}^{1/2}v\rangle$, with
$\langle \cdot,\cdot \rangle$ the standard inner product on $L^2({\mathbb R})$.
While in the case of nonlinear dispersive waves  ${\mathcal L}$ is never
positive definite due to the presence of symmetries one can count the number
of eigenvalues off of the imaginary axis in terms of the dimensions of the
kernel and the negative definite subspace of ${\mathcal L}.$

In the case of periodic solutions to the Korteweg-DeVries equation the
best results of this type that we are aware of are due to Haragus and
Kapitula\cite{HK} and Deconinck and Kapitula\cite{DK}.
Kapitula and Deconinck give the following construction:
consider the spectral problem \eqref{eqn:operator} with the linearized operator ${\bf J}\mathcal{L}$
acting on the real Hilbert space $L^2({\mathbb T}_k)$, and let $k_\RM$, $k_\CM$,
and $k_\mathbb{I}^-$ be defined as before.
Then one has the count
 \[
k_{\mathbb I}^-+k_{\mathbb{R}}+k_{\mathbb C}= n({\mathcal L}|_{H_1}) - n(D).
\]
where $n(\cdot)$ denotes the dimension of the negative definite subspace of the appropriate operator
acting on $L^2({\mathbb T}_k)$,  and
$D$ is a symmetric matrix whose entries are given by
\[
D_{i,j}=\left<y_i,\mathcal{L}|_{H_1}y_j\right>
\]
where $\{y_i\}$ is any basis for the generalized eigenspace of $\partial_x\mathcal{L}$ such that
\[
{\bf J}\mathcal{L}|_{H_1}\;{\rm span}\{y_i\}={\rm ker}(\mathcal{L}|_{H_1}).
\]
The importance of this formula is the following: By using the results of
\cite{GSS}, it is known that a sufficient condition for the orbital stability
of a periodic traveling wave solution of \eqref{gkdv} is given
by $n(\mathcal{L}|_{H_1})=n(D)$.  Thus, if one is able to
prove that
\[
k_{\mathbb I}^-+k_{\mathbb{R}}+k_{\mathbb C}=0,
\]
one can immediately conclude orbital stability in $L^2({\mathbb T}_k)$.
It follows that spectral stability can be upgraded to the orbital stability if there
are no purely imaginary eigenvalues of negative Krein signature.  Moreover, the count
clearly gives information concerning the spectral stability of the underlying periodic
wave.  In particular, a necessary condition for the spectral stability of such a solution
in $L^2({\mathbb T}_k)$ is for the difference $n(\mathcal{L}|_{H_1})-n(D)$
to be even: more will be said on this later.

In another paper Bronski and  Johnson \cite{BrJ} considered the analogous spectral stability problem
to localized perturbations from the
point of view of Whitham modulation theory.  Bronski and Johnson gave a
normal form calculation for the spectral problem in a neighborhood
of the origin in the spectral plane, which amounts to studying the spectral stability
of a periodic traveling wave solution of \eqref{gkdv} to long-wavelength perturbations: so called
modulational instability.  It was found that the presence of such an instability could be detected
by computing various Jacobians of maps from the conserved quantities of the gKdV flow to the
parameter space $(a,E,c)$ used to parameterize the periodic traveling waves.
By deriving an asymptotic expansion of the periodic Evans function
\[
D(\mu,e^{i\kappa})=\det\left(\MM(\mu)-e^{i\kappa}{\bf I}\right)
\]
in a neighborhood of $(\mu,\kappa)=(0,0)$, where $\MM(\mu)$ is the monodromy
matrix associated with third order ODE \eqref{eqn:operator} and ${\bf I}$ is the
three-by-three identity matrix, it was found that the spectrum of the operator ${\bf J}\mathcal{L}$
in a neighborhood of the origin is determined by the modulational instability index
\begin{equation}
\Delta_{\rm MI}=\frac{1}{2}\left(\{T,P\}_{E,c}+2\{M,P\}_{E,a}\right)^3-3\left(\frac{3}{2}\{T,M,P\}_{a,E,c}\right)^2.\label{MI}
\end{equation}
In particular, it was found that if $\Delta_{\rm MI}>0$ then the spectrum locally consists of a
symmetric interval on the imaginary axis with multiplicity three, while if $\Delta_{\rm MI}<0$
the spectrum locally consists of a symmetric interval of the imaginary axis with multiplicity one, along with
two branches which, to leading order, bifurcate from the origin along straight lines with non-zero slope.
Such information is important if one wishes to consider spectral or nonlinear stability
to perturbations whose fundamental period is an integer multiple of that of the underlying wave.

A similar geometric construction was later found useful by Johnson \cite{MJ1} to prove that such a solution
with fundamental period $T$ is orbitally stable in $L^2({\mathbb T}_k)$ if
the quantities $T_E$, $\{T,M\}_{a,E}$, and $\{T,M,P\}_{a,E,c}$ are all positive.  Notice
that for this sign pattern Theorem \ref{thm:index} also implies orbital stability in
$L^2({\mathbb T}_k)$, and hence the index theorem can be regarded as an extension
of the theorem of Johnson.  Both the calculations of Bronski and Johnson and of Johnson
required a detailed understanding of the structure of the kernel and generalized kernels
of the linear operators $\partial_x {\mathcal L}$ and ${\mathcal L\partial_x}$ with periodic boundary conditions.  These kernels
were constructed by taking infinitesimal variations in each of the defining parameters
$a$, $E$, and $c$ as well as the translation mode: this construction will be reviewed in the
Proposition \ref{prop:kernel}.  This information on the structure of the null-spaces, together with some additional work,
will allow us to prove our main result.

The operators we consider in this paper are non-self-adjoint and
the null-spaces typically have a non-trivial Jordan structure. We adopt the
following notation:
\begin{notation}
Given an operator ${\mathcal A}$ acting on $L^2({\mathbb T}_k)$ for some $k\in\mathbb{N}$,
we define the $k^{\rm th}$ generalized kernel as follows
\[
{\rm g\!\!-\!\!ker}_k({\mathcal A})= {\rm ker}(A^{k+1})/{\rm ker}(A^{k}).
\]
Thus ${\rm g\!\!-\!\!ker}_0({\mathcal A})=\ker(A)$ is the usual kernel, $A({\rm g\!\!-\!\!ker}_1(A))=\ker(A)$, and so on.
\end{notation}
We begin by stating a preliminary lemma regarding the Jordan structure
of $\partial_x {\mathcal L}$

\begin{prop}\label{prop:kernel}
Given any $k\in\mathbb{N}$, one generically has
$\dim(\ker({\mathcal L}))=1$, $\dim(\ker({\partial_x}{\mathcal L}))=2$,
$\dim({\rm g\!\!-\!\!ker}_1({\partial_x}{\mathcal L}))=1,$ and ${\rm g\!\!-\!\!ker}_j({\partial_x}{\mathcal L})=\emptyset$
for $j\ge 2$.  In particular, we have the following result:
\begin{itemize}
\item{If $T_E\neq 0$ then ${\rm ker}{\mathcal L} = {\rm span}\{u_x\}. $ If $T_E=0$
then  ${\rm ker}{\mathcal L} = {\rm span}\{u_x,u_E\}. $}
\item{If $T_E$ and $T_a$ do not simultaneously vanish then
\begin{align}
&{\rm ker}(\partial_x {\mathcal L})={\rm span}\left\{ u_x,\left|\begin{array}{cc}u_E & T_E \\ u_a & T_a \end{array}\right| \right\} \\
&{\rm ker}({\mathcal L}\partial_x )={\rm span}\{ 1,u \}
\end{align}
}
\item{If  $T_E$ and $T_a$ simultaneously vanish then
 \begin{align}
&{\rm ker}(\partial_x {\mathcal L})={\rm span}\{u_x,u_a,u_E\}\\
&{\rm ker}({\mathcal L}\partial_x)={\rm span}\left\{1,u,\int_0^x \!\!u_E~dx \right\}.
\end{align}
Since the defining ordinary differential equation is third order the
kernel cannot be more than three dimensional.
}
\item{
If $\{T,M\}_{a,E}\neq 0$  then
\begin{align}
&{\rm g\!\!-\!\!ker}_1(\partial_x{\mathcal L})=
{\rm span} \left\{\left|\begin{array}{ccc}u_E & T_E & M_E\\ u_a & T_a & M_a \\ u_c & T_c & M_c \end{array}\right|\right\} \\
&{\rm g\!\!-\!\!ker}_1({\mathcal L}\partial_x)={\rm span} \left\{ \int_0^x\left|\begin{array}{ccc}u_E & T_E & M_E\\ u_a & T_a & M_a \\ u_c & T_c & M_c \end{array}\right|\right\}
\end{align}
The generalized kernels ${\rm g\!\!-\!\!ker}_1$ are one dimensional unless
$\{T,M\}_{a,E}$ and $\{T,P\}_{a,E}$ vanish simultaneously.
}
\item{
Assuming $\{T,M\}_{a,E}\neq 0$ the subsequent generalized kernels
${\rm g\!\!-\!\!ker}_k$ ($k\ge 2$) are empty as long as $\{T,M,P\}_{E,a,c} \neq 0$
}
\end{itemize}
\end{prop}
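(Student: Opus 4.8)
The plan is to build all of the kernel and generalized-kernel elements explicitly by differentiating the traveling-wave profile equation \eqref{eqn:ODE} with respect to the parameters $a$, $E$, $c$ and the translation parameter $x_0$, and then to organize these variations so as to detect exactly when each variation is periodic, and when it lies in the range of the preceding operator in the Jordan chain. The starting observation is that differentiating the profile equation (or, equivalently, the second-order ODE $u_{xx} + f(u) - cu - a = 0$ obtained from one more differentiation) in $x_0$ gives $\mathcal{L} u_x = 0$, so $u_x \in \ker \mathcal{L}$ always. Differentiating in $a$, $E$, $c$ gives $\mathcal{L} u_a = -1$, $\mathcal{L} u_E = 0$ formally — but here one must be careful: the parameter derivatives $u_a$, $u_E$, $u_c$ of the \emph{periodic} profile are in general \emph{not} periodic, since the period $T$ itself depends on the parameters. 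The correct statement is that $\partial_\alpha u$ solves the inhomogeneous linearized equation but picks up a secular (linearly growing) term whose coefficient is proportional to $T_\alpha$. This is the technical heart of the construction, and isolating the periodic combinations is what produces the Jacobian/Wronskian determinants appearing in the statement.

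Concretely, first I would set up the fundamental solution system. Since $u_x$ solves $\mathcal{L} v = 0$ and is periodic, the second solution $\phi$ of $\mathcal{L} v = 0$ is generically non-periodic with $\phi(x+T) = \phi(x) + \theta\, u_x(x)$ for some constant $\theta$; a standard computation (differentiating the period map) identifies $\theta$ with a nonzero multiple of $T_E$, which explains why $T_E \ne 0$ forces $\ker \mathcal{L} = \operatorname{span}\{u_x\}$ while $T_E = 0$ admits the extra periodic element $u_E$. Next, for the kernel of $\partial_x \mathcal{L}$, note $\partial_x \mathcal{L} v = 0$ means $\mathcal{L} v = \text{const}$; the constant-zero case recovers $\ker\mathcal{L}$, and the constant-nonzero case is handled by the particular solution $-u_a$ (using $\mathcal{L} u_a = -1$), so a general element is $\alpha u_x + \beta u_E + \gamma u_a$ with the periodicity constraint on $\beta, \gamma$ expressed through the monodromy; solving this $2\times 2$ (or $1\times 1$, generically) linear system yields precisely the determinant $\begin{vmatrix} u_E & T_E \\ u_a & T_a \end{vmatrix}$. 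The dual statements for $\ker(\mathcal{L}\partial_x)$ follow from $\mathcal{L}\partial_x v = 0 \iff \partial_x v \in \ker\mathcal{L}$ together with the requirement that $v$ itself be periodic (so $\partial_x v$ must integrate to zero over a period), which brings in $1$ and $u$ as the antiderivatives of $0$ and $u_x$ respectively — here one uses the profile equation to check $\oint u_x = 0$ trivially and the structure of $\mathcal{L}$ to see $\mathcal{L} u = $ something whose antiderivative closes up.

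For the generalized kernels, the scheme iterates: $y \in \operatorname{g-ker}_1(\partial_x\mathcal{L})$ means $\partial_x\mathcal{L} y \in \ker(\partial_x\mathcal{L})$, i.e. $\mathcal{L} y = \text{(affine combination whose $x$-derivative is a kernel element)}$; solving this with the particular solutions $u_a, u_c$ (using $\mathcal{L} u_c = \tfrac12 u^2$ or the appropriate right-hand side derived from differentiating the profile equation in $c$) and imposing periodicity gives the $3\times 3$ determinant $\{u,T,M\}$-type object displayed, and the nonvanishing of $\{T,M\}_{a,E}$ is exactly the nondegeneracy condition for the relevant linear system to have a one-dimensional solution space. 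The emptiness of $\operatorname{g-ker}_j$ for $j\ge 2$ under $\{T,M,P\}_{a,E,c}\ne 0$ is the statement that the next iteration's linear system is inconsistent — this is where the third Jacobian enters, and it reflects the fact that the ODE is only third order so no fourth independent variation exists. I expect the main obstacle to be the bookkeeping of the secular terms: correctly computing the monodromy action on each parameter variation $u_a, u_E, u_c$ and verifying that the periodicity conditions assemble into exactly the claimed determinants (rather than some other combination) requires a careful differentiation of the period, mass, and momentum integrals \eqref{period}--\eqref{momentum} under the integral sign, with attention to the turning-point singularities of the integrand. Everything else is linear algebra once those monodromy coefficients are pinned down.
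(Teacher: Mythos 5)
Your proposal follows essentially the same route as the paper's proof: differentiate the profile equation in $x_0,a,E,c$ to get $\mathcal{L}u_x=0$, $\mathcal{L}u_E=0$, $\mathcal{L}u_a=-1$, track the secular (monodromy) shifts proportional to $T_E,T_a,\dots$ across one period, form the periodic combinations $T_a u_E - T_E u_a$ and its $3\times 3$ determinant analogue, and close the Jordan chain with solvability conditions producing $\{T,M\}_{a,E}$, $\{T,P\}_{a,E}$, $\{T,M,P\}_{a,E,c}$. Two small corrections: the identity you need is $\mathcal{L}u_c=-u$ (not $\tfrac12 u^2$), obtained exactly as you indicate by differentiating the second-order profile equation in $c$, and the emptiness of ${\rm g\!\!-\!\!ker}_j$ for $j\ge 2$ is a Fredholm solvability condition (orthogonality of the chain element to $\ker(\mathcal{L}\partial_x)=\mathrm{span}\{1,u\}$, which is where $\{T,M,P\}_{a,E,c}$ enters), not a consequence of the ODE being third order.
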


\begin{proof}
This follows from the observation that the derivatives of the wave profile
$u$ with respect to the parameters $a,E,c$ satisfy the following equations
\begin{align}
& {\mathcal L}u_x = 0 \\
& {\mathcal L}u_E = 0 \\
& {\mathcal L}u_a = -1 ~~~~~(=-\frac{\delta M}{\delta u}) \\
& {\mathcal L}u_c = -u ~~~~~(=-\frac{\delta P}{\delta u})
\end{align}
reflecting the fact that the constants $(a,c)$ arise as Lagrange
multipliers to enforce the mass and momentum constraints. In the above
equality ${\mathcal L}$ denotes the formal operator without consideration
for boundary conditions. In order to find elements of the kernel
one must impose periodic boundary conditions.  It is not hard to see that
$u_x$ is periodic while derivatives with respect to the quantities
are not periodic - since the period $T$ depends on $(E,a,c)$ ``secular'' terms
(in the sense of multiple scale perturbation theory) arise: in particular
one sees that the change across a period is proportional to derivatives of the
period:
\[
 \left(\begin{array}{c} u_E(T) \\ u_{xE}(T) \\ u_{xxE}(T) \\ \vdots \end{array}\right) -
 \left(\begin{array}{c} u_E(0) \\ u_{xE}(0) \\ u_{xxE}(0) \\ \vdots \end{array}\right) = T_E  \left(\begin{array}{c} u_{xE}(0) \\ u_{xxE}(0) \\ u_{xxxE}(0) \\ \vdots \end{array}\right)
\]
with similar expressions for the change in the $u_a,u_c$ across a period. Thus the quantity
\[
\phi_1(x;a,c,e)=\left| \begin{array}{cc} u_E & T_E \\ u_a & T_a \end{array}\right|
\]
is periodic and satisfies ${\mathcal L}\phi_1 = T_E.$ Similarly the quantity
\[
\phi_2(x;E,a,c)=\left| \begin{array}{ccc} u_E & T_E & M_E\\ u_a & T_a & M_a \\
u_c & T_c & M_c\end{array}\right|
\]
is by construction periodic and satisfies  ${\mathcal L}\phi_2 = \{T,M\}_{E,c}- \{T,M\}_{E,a} u,$ and thus $\partial_x{\mathcal L}\phi_2 = \{T,M\}_{E,a} u_x \in {\rm ker}(\partial_x {\mathcal L}).$ Note that while $\phi_1$ is essentially uniquely
determined $\phi_2$ is only determined up to an element of the kernel.
Here we have chosen
to make $\phi_2$ have mean zero since this is the convention
required in the work of Deconinck and Kapitula. More will be said on this choice
later.

The rest of the calculation follows in straightforward way from calculations
of this sort. For instance the existence of a second element of the generalized
kernel is equivalent to the solvability of
\[
\partial_x {\mathcal L} = \left|\begin{array}{cc}u_E & T_E \\ u_a & T_a \end{array}\right|.
\]
By the Fredholm alternative ${\rm ran}(\partial_x {\mathcal L}) = {\rm ker}({\mathcal L} \partial_x)^\perp$ and thus the above is solvable if only if
\begin{align}
& \langle 1, \left|\begin{array}{cc}u_E & T_E \\ u_a & T_a \end{array}\right| \rangle = \{T,M\}_{a,E} = 0 \\
& \langle u, \left|\begin{array}{cc}u_E & T_E \\ u_a & T_a \end{array}\right| \rangle = \{T,P\}_{a,E} = 0
\end{align}
The rest of the claims follow similarly. In the case that the genericity
conditions do not hold we do not attempt to compute the Jordan form,
but we do remark that the algebraic multiplicity of the zero eigenvalue
must jump from three to at least five, and is necessarily odd.
\end{proof}

In essence the above proposition shows that the elements of the kernel of
$\partial_x {\mathcal L}$ are given by  elements of
the tangent space to the (two-dimensional) manifold of solutions
{\em of fixed period at
fixed wavespeed}, while the element of the first generalized kernel
is given by a vector in the tangent space to the (three-dimensional)  manifold of solutions
{\em of fixed period} with no restrictions on wavespeed. As one might expect
all of the geometric information on independence in the above
proposition can be expressed in terms of various Jacobians.
The next fact we note is that the signs of certain of these quantities
conveys geometric information about the various operators.

\begin{lemma}
Let $n({\mathcal L})$ be the dimension of the negative definite subspace of
$\mathcal L$ as an operator on $L^2({\mathbb T}_k)$ with periodic boundary
conditions. Then
\[
n({\mathcal L}) = \left\{\begin{array}{c}2k-1 ~~~~~~T_E>0 \\ 2k ~~~~~~T_E<0 \end{array}\right.
\]
\end{lemma}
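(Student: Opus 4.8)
The plan is to read off the spectrum of the Hill operator $\mathcal L=-\partial_x^2-f'(u)+c$ on ${\mathbb T}_k$ from Floquet--Sturm theory, the only nontrivial input being a sign computation that couples the Floquet discriminant to $T_E$. Since $\mathcal L u_x=0$, the value $\lambda=0$ is a $T$-periodic eigenvalue of $\mathcal L$ with eigenfunction $u_x$, and because $u$ solves the first-order equation \eqref{eqn:ODE} it has exactly one maximum and one minimum per period, so $u_x$ has exactly two zeros in $[0,T)$. By the Sturm oscillation theorem for periodic potentials a $T$-periodic eigenfunction with $2m$ zeros belongs to $\{\mu_{2m-1},\mu_{2m}\}$, where $\mu_0<\mu_1\le\mu_2<\mu_3\le\cdots$ are the $T$-periodic eigenvalues; hence $\lambda=0\in\{\mu_1,\mu_2\}$. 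Differentiating $u(x+T(a,E,c))\equiv u(x)$ in $E$ gives $u_E(x+T)=u_E(x)-T_E u_x(x)$, so when $T_E\neq 0$ the formal solution $u_E$ of $\mathcal L u_E=0$ is non-periodic and the monodromy matrix of $\mathcal L\phi=0$ is a nontrivial Jordan block at $\lambda=0$; thus $0$ is a \emph{simple} band edge and $\mu_1\neq\mu_2$. So exactly one of two pictures holds: $0=\mu_1$, with the open gap $(\mu_1,\mu_2)$ and all of the band $[\nu_2,\mu_1]$ lying up to and to the left of $0$, or $0=\mu_2$, with the gap and the band $[\nu_2,\mu_1]$ strictly to the left of $0$ (here $\nu_j$ denote the antiperiodic eigenvalues). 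Since across any complete spectral band the Floquet exponent runs monotonically over $[0,\pi]$ and meets the $k$th roots of unity exactly $k$ times, each complete band below $0$ contributes exactly $k$ eigenvalues (with multiplicity) to $\mathcal L$ on ${\mathbb T}_k$. In the first picture the bands below $0$ are $[\mu_0,\nu_1]$ (full, $k$ eigenvalues) and $[\nu_2,\mu_1]$ (ending at $0$, $k-1$ eigenvalues strictly below $0$), giving $n(\mathcal L)=2k-1$; in the second picture both $[\mu_0,\nu_1]$ and $[\nu_2,\mu_1]$ are full, giving $n(\mathcal L)=2k$. So everything comes down to deciding which picture occurs.

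The heart of the matter---and the step I expect to be the main obstacle---is the sign identity
\[
\Delta'(0)=T_E\int_0^T u_x^2\,dx,
\]
where $\Delta(\lambda)$ is the Floquet discriminant of $\mathcal L\phi=\lambda\phi$. I would prove it by a first-order perturbation computation. Normalize the translate so that $x=0$ is the crest of $u$ for all nearby $E$; then $u_x(0)=0$, $u_{xx}(0)<0$, the standard solutions $\phi_1,\phi_2$ (with $\phi_i^{(j-1)}(0)=\delta_{ij}$) satisfy $\phi_2(\,\cdot\,;0)=u_x/u_{xx}(0)$ and $u_E=u_E(0)\,\phi_1(\,\cdot\,;0)$, and at $\lambda=0$ one has $\phi_2(T)=0$, $\phi_2'(T)=1$, $\phi_1(T)=1$. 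A variation-of-parameters formula for $\partial_\lambda\phi_i$ then collapses, after the terms proportional to $\phi_2(T)$ and to $\phi_1(T)-\phi_2'(T)$ drop out, to $\Delta'(0)=\phi_1'(T)\,u_{xx}(0)^{-2}\int_0^T u_x^2\,dx$. Matching the monodromy relation $\phi_1(x+T)=\phi_1(x)+\phi_1'(T)u_{xx}(0)^{-1}u_x(x)$ against $u_E(x+T)=u_E(x)-T_E u_x(x)$ gives $\phi_1'(T)=-T_E u_{xx}(0)/u_E(0)$, while differentiating the turning-point relation $E+a u_{\max}+\tfrac c2 u_{\max}^2-F(u_{\max})=0$ in $E$ gives $u_E(0)=\partial_E u_{\max}=-1/u_{xx}(0)>0$; substituting these proves the identity.

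Granting the identity, $\Delta'(0)$ and $T_E$ have the same sign, so $\Delta(\lambda)<2$ for $\lambda$ slightly below $0$ precisely when $T_E>0$. In that case $0$ is the right endpoint of the band $[\nu_2,\mu_1]$, i.e. $0=\mu_1$ and $n(\mathcal L)=2k-1$; when $T_E<0$ we have $\Delta>2$ just below $0$, so $0=\mu_2$ and $n(\mathcal L)=2k$, completing the proof. Apart from the sign identity every step is routine Floquet/Sturm bookkeeping, and the hypothesis $T_E\neq 0$ is exactly what is needed to exclude the degenerate case $\mu_1=\mu_2=0$.
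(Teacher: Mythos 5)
Your proof is correct and follows essentially the same route as the paper: use $\mathcal L u_x=0$ and the periodic Sturm oscillation theorem to place $\lambda=0$ at a band edge, count band-by-band on ${\mathbb T}_k$, and decide upper versus lower edge from the sign of the slope of the Floquet discriminant, which is identified with the sign of $T_E$. The only difference is that you actually supply the key computation $\Delta'(0)=T_E\int_0^T u_x^2\,dx$ (which checks out, including the signs), whereas the paper merely asserts this identity and defers it to an appendix.
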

\begin{proof}
As noted above $u_x \in \ker({\mathcal L})$ and thus zero is a band-edge
of ${\mathcal L}$. From the Sturm oscillation theorem it is clear that either
 $n({\mathcal L})=n_{u_x}-1 $ (if zero is an upper band-edge) or  $n({\mathcal L})=n_{u_x}$ if zero is a lower band-edge. The Floquet discriminant $k(\mu)$
has positive slope at an upper band-edge and negative slope at a lower
band-edge (and vanishes at a double point), and thus serves to distinguish the
two cases.  It can be shown (see appendix) that the sign of $k'(\mu)$ is
equal to the sign of $T_E$ and thus the result follows. Note that the
sign of the slope of the Floquet discriminant has an interpretation as
the Krein signature of the eigenvalues in the band.
\end{proof}

This lemma implies that the  vanishing of $T_E$ signals an
eigenvalue of ${\mathcal L}$ passing through the origin and a change in
the dimension of $n({\mathcal L})$. The next proposition gives a similar
interpretation for  $n({\mathcal L}|_{H_1})$.

\begin{proposition}
Let $H_1 (={\rm ran}(\partial_x{\mathcal L})$ denote the space of mean-zero, 
$T$ periodic functions, and
$n({\mathcal L|_{H_1}})$ denote the dimension of the
negative-definite subspace of the restriction of the operator ${\mathcal L}$ 
to this subspace.
Assume that $\{T,M\}_{a,E}\neq 0$ and $T_E and \{T,M\}_{a,E}$ never vanish
simultaneously. Then we have the equality
\[
n({\mathcal L}|_{H_1}) = n({\mathcal L}) - \left\{ \begin{array}{c}0 ~~~~~~T_E \{T,M\}_{a,E} >0  \\
1 ~~~~~~T_E \{T,M\}_{a,E} <0  \end{array}\right\}
\]
\label{prop:count2}
\end{proposition}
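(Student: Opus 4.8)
The plan is to treat $\mathcal{L}|_{H_1}$ as the restriction of the self-adjoint operator $\mathcal{L}$ on $L^2(\mathbb{T}_k)$ to the codimension-one subspace $H_1=\{1\}^{\perp}$ of mean-zero functions, and to apply the classical counting lemma for a self-adjoint operator restricted to the orthogonal complement of a single vector. In the form I would use it, this lemma says: if $\mathcal{A}$ is self-adjoint with compact resolvent, $v\perp\ker(\mathcal{A})$, and $\psi$ is any solution of $\mathcal{A}\psi=v$ (the pairing $\langle\psi,v\rangle$ being independent of the choice of $\psi$ because $v\perp\ker(\mathcal{A})$), then, provided $\langle\psi,v\rangle\neq0$,
\[
n\bigl(\mathcal{A}|_{\{v\}^{\perp}}\bigr)=
\begin{cases}
n(\mathcal{A}), & \langle\psi,v\rangle>0,\\
n(\mathcal{A})-1, & \langle\psi,v\rangle<0.
\end{cases}
\]
The justification is the familiar interlacing argument: the eigenvalues of $\mathcal{A}|_{\{v\}^{\perp}}$ interlace those of $\mathcal{A}$, the new ones being the zeros of the strictly increasing secular function $g(\lambda):=\langle(\mathcal{A}-\lambda)^{-1}v,v\rangle$, and whether the zero of $g$ lying between the largest negative and the smallest positive eigenvalue of $\mathcal{A}$ falls on the negative or the positive side of the origin is decided by the sign of $g(0)=\langle\psi,v\rangle$. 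On $L^2(\mathbb{T}_k)$ the operator $\mathcal{L}$ has compact resolvent, so there is no essential-spectrum issue; I would cite \cite{HK,DK} for this lemma rather than reprove it. To apply it with $\mathcal{A}=\mathcal{L}$ and $v=1$ one only needs $1\perp\ker(\mathcal{L})$: under the hypothesis $T_E\neq0$, Proposition \ref{prop:kernel} gives $\ker(\mathcal{L})={\rm span}\{u_x\}$, and $\langle1,u_x\rangle=\oint u_x\,dx=0$.

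The second step is to evaluate $\langle\psi,1\rangle$ where $\mathcal{L}\psi=1$. Here I would reuse the function
\[
\phi_1=\left|\begin{array}{cc}u_E & T_E\\ u_a & T_a\end{array}\right|
\]
from the proof of Proposition \ref{prop:kernel}, which is periodic and satisfies $\mathcal{L}\phi_1=T_E$; hence (since $T_E\neq0$) one may take $\psi=\phi_1/T_E$. Invoking the identity $\langle1,\phi_1\rangle=\{T,M\}_{a,E}$ already recorded there, I get
\[
\langle\psi,1\rangle=\frac{\langle\phi_1,1\rangle}{T_E}=\frac{\{T,M\}_{a,E}}{T_E},
\]
which is nonzero by hypothesis. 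Feeding this sign into the counting lemma, and then substituting the value of $n(\mathcal{L})$ supplied by the preceding lemma, produces exactly the asserted identity.

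I expect the genuinely delicate points to be the following, none of which looks serious. First, the bookkeeping around the kernel: $u_x\in\ker(\mathcal{L})\cap H_1$ contributes a zero eigenvalue to both $\mathcal{L}$ and $\mathcal{L}|_{H_1}$ and so cancels out of the count, and the representative $\psi=\phi_1/T_E$ need not be orthogonal to $u_x$, but this is harmless because the value $\langle\psi,1\rangle$ depends only on the fact that $1\perp u_x$. Second, delimiting the degenerate locus: the formula is meaningful only when $T_E\neq0$, and the stated hypotheses---that $\{T,M\}_{a,E}\neq0$ and that $T_E$ and $\{T,M\}_{a,E}$ do not vanish simultaneously---are precisely what guarantees that $\psi$ is well defined and that $\langle\psi,1\rangle\neq0$, so that no second-order perturbation analysis of the constrained eigenvalue at the origin is needed. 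Third, one should make sure the interlacing lemma is quoted in enough generality (a kernel allowed, countably many eigenvalues, the relevant eigenvectors not orthogonal to $v$) to cover $\mathcal{L}$ on $L^2(\mathbb{T}_k)$; this is standard, and is the one ingredient I would import rather than derive.
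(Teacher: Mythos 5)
Your argument is correct, and it reaches the stated sign pattern: with $\psi=\phi_1/T_E$ you get $\langle\mathcal{L}^{-1}1,1\rangle=\{T,M\}_{a,E}/T_E$, which is nonzero under the hypotheses, and the one-constraint index lemma then gives $n(\mathcal{L}|_{H_1})=n(\mathcal{L})$ when $T_E\{T,M\}_{a,E}>0$ and $n(\mathcal{L})-1$ when $T_E\{T,M\}_{a,E}<0$, exactly as claimed; the kernel issue is handled correctly since $1\perp u_x$. However, this is a genuinely different route from the paper's proof. The paper argues by continuation in parameter space: Courant minimax pins the difference $n(\mathcal{L})-n(\mathcal{L}|_{H_1})$ to $\{0,1\}$, the difference can change only where $\{T,M\}_{a,E}$ vanishes (since then $\{u,T\}_{a,E}\in H_1$ lies in $\ker(\mathcal{L}|_{H_1})$), and a local perturbation expansion $\lambda=-T_E\{T,M\}_{a,E}/\Vert\phi_1\Vert^2+o(\{T,M\}_{a,E})$ determines which side of such a crossing carries which count; the case in which $\{T,M\}_{a,E}$ never vanishes is deferred and closed only later, in the proof of Lemma \ref{D}, via the parity information on $k_{\mathbb{R}}$ from Bronski--Johnson. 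Your approach instead evaluates the constrained count directly through the quantity $\langle\mathcal{L}^{-1}1,1\rangle$ (the same Deconinck--Kapitula-type machinery the paper invokes for $n(D)$, applied to the single constraint $1$), so it is pointwise in parameter space, needs no homotopy, no perturbation expansion, and no appeal to the orientation/parity index. What the paper's route buys in exchange is the explicit local crossing formula, which is of independent interest for understanding how $n(\mathcal{L}|_{H_1})$ changes across the locus $\{T,M\}_{a,E}=0$; what your route buys is a self-contained and uniform proof of the proposition under exactly the stated hypotheses (with $T_E\neq0$ needed, as you note, both for $\psi$ and for the statement itself to make sense), provided you quote the interlacing/constraint lemma in the generality you describe.
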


\begin{proof}
Since we are restricting to a codimension one subspace the
Courant minimax principle immediately implies that we have
either   $n({\mathcal L}|_{H_1})= n({\mathcal L})$ or
$n({\mathcal L}|_{H_1})= n({\mathcal L})-1$.

Next note that when $\{T,M\}_{a,E}=0$ then the function $\{u,T\}_{a,E}$
has mean zero and lies in $H_1$. Further one has that
\[
{\mathcal L}\{u,T\}_{a,E} = -T_E
\] or equivalently
\[
{\mathcal L}|_{H_1}\{u,T\}_{a,E} = 0.
\]
Therefore the vanishing of  $\{T,M\}_{a,E}$ signals a change in the dimension
of $n({\mathcal L}|_{H_1})$. A local perturbation analysis shows that near
a zero of $\{T,M\}_{a,E}$ we have that there is an eigenvalue
of ${\mathcal L}|_{H_1}$ which is given by
\[
\lambda = - \frac{T_E \{T,M\}_{a,E}}{\Vert \phi_1\Vert^2} + o(\{T,M\}_{a,E})
\]
Thus as long as $ \{T,M\}_{a,E}$ vanishes for some parameter value
the above count is correct. The case that $\{T,M\}_{a,E}$ is non-vanishing
will be handled later.  
\end{proof}

\begin{remark}
The last two results show that geometric quantities associated to the
classical mechanics of the traveling waves contain information about
changes in the nature of the spectrum of the linearized problem. Specifically:
\begin{itemize}
\item Vanishing of $T_E= K_{EE}$ signals  a change in the dimension of $n({\mathcal L})$, the negative definite subspace of $\mathcal L.$
\item Vanishing of $ \{T,M\}_{a,E} = -\left|\begin{array}{cc}K_{EE} & K_{Ea} \\ K_{aE} & K_{aa}\end{array}\right|$ signals a change in the $n({\mathcal L}|_{{\rm Ran}(\partial_x)})$,  the negative definite subspace of ${\mathcal L}|_{{\rm Ran}(\partial_x)}$.
\item   Vanishing of $ \{T,M,P\}_{a,E,c}$ signals a change in the length
of the Jordan chain of $\partial_x {\mathcal L}$.
\end{itemize}
\end{remark}

Finally, to conclude the proof of Theorem \ref{thm:index}, we must calculate $n(D)$.  This is the content
of the following lemma.

\begin{lem}\label{D}
Under the assumptions of Theorem \ref{thm:index}, one has that $D\in\RM$ with
\[
D=-\{T,M\}_{a,E}\{T,M,P\}_{a,E,c}.
\]
Thus, $n(D)$ is either $0$ or $1$ depending if $\{T,M\}_{a,E}\{T,M,P\}_{a,E,c}$ is negative
or positive, respectively.
\end{lem}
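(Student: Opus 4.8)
The plan is to compute the $1\times 1$ matrix $D$ directly from its definition $D_{1,1}=\langle y_1,\mathcal{L}|_{H_1}y_1\rangle$, where $y_1$ is the basis vector of the generalized eigenspace chosen so that $\partial_x\mathcal{L}\,y_1\in\ker(\mathcal{L}|_{H_1})$. Under the genericity hypotheses of Theorem~\ref{thm:index}, Proposition~\ref{prop:kernel} tells us that $\ker(\partial_x\mathcal{L})={\rm span}\{u_x,\phi_1\}$ with $\phi_1=\{u,T\}_{a,E}$ (appropriately ordered), and that ${\rm g\text{-}ker}_1(\partial_x\mathcal{L})={\rm span}\{\phi_2\}$ with $\phi_2=\{u,T,M\}_{a,E,c}$ normalized to mean zero. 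So the total generalized eigenspace attached to the zero eigenvalue is three-dimensional, spanned by $u_x,\phi_1,\phi_2$; but $u_x$ lies in $\ker(\partial_x\mathcal{L})$ and is \emph{not} in $H_1$-type difficulty — rather the subtlety is which of these span the space whose image under $\partial_x\mathcal{L}$ is $\ker(\mathcal{L}|_{H_1})$. Since $\ker(\mathcal{L}|_{H_1})={\rm span}\{\phi_1\}$ (one-dimensional, as $\mathcal{L}\phi_1=T_E$ is constant hence $\mathcal{L}|_{H_1}\phi_1=0$), and $\partial_x\mathcal{L}\phi_2=\{T,M\}_{a,E}u_x$ while $\partial_x\mathcal{L}\phi_1 = 0$... — I need to recheck: the correct generator is the one mapping \emph{onto} $\ker(\mathcal{L}|_{H_1})$ under $\mathcal{L}|_{H_1}$ composed with... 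Actually $D$ is built from the basis $\{y_i\}$ of ${\rm g\text{-}ker}$ of $\partial_x\mathcal{L}$ satisfying $\partial_x\mathcal{L}\,{\rm span}\{y_i\}=\ker(\mathcal{L}|_{H_1})$; here this forces $y_1=\phi_2$, since $\partial_x\mathcal{L}\phi_2=\{T,M\}_{a,E}\,u_x$, and one checks $u_x$ spans... no — one needs $\partial_x\mathcal{L}y_1\in\ker(\mathcal{L}\partial_x)^\perp\cap(\text{stuff})$. The bookkeeping here is exactly the main obstacle.

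The key computation, once the right $y_1$ is identified, is the following chain: $D=\langle \phi_2,\mathcal{L}\phi_2\rangle$. From the proof of Proposition~\ref{prop:kernel} we have the crucial identity $\mathcal{L}\phi_2=\{T,M\}_{E,c}-\{T,M\}_{E,a}\,u$. Therefore
\[
D=\langle\phi_2,\mathcal{L}\phi_2\rangle=\{T,M\}_{E,c}\langle\phi_2,1\rangle-\{T,M\}_{E,a}\langle\phi_2,u\rangle.
\]
Because $\phi_2$ was normalized to have mean zero, the first term vanishes, leaving $D=-\{T,M\}_{E,a}\langle u,\phi_2\rangle$. It then remains to evaluate $\langle u,\phi_2\rangle=\langle u,\{u,T,M\}_{a,E,c}\rangle$. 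Expanding the $3\times3$ determinant $\phi_2$ along its first column and using $\langle u,u_a\rangle = \tfrac12\partial_a\langle u,u\rangle = \tfrac12 P_a$ and similarly $\langle u,u_E\rangle=\tfrac12 P_E$, $\langle u,u_c\rangle=\tfrac12 P_c$, I expect $\langle u,\phi_2\rangle$ to collapse (after a cofactor manipulation and up to sign) to $\tfrac12\{T,M,P\}_{a,E,c}$ — the half cancelling against a factor that appears when relating $\{T,M\}_{E,a}$ to $-\{T,M\}_{a,E}$, yielding $D=-\{T,M\}_{a,E}\{T,M,P\}_{a,E,c}$. The fact that $D$ comes out real (a scalar, not a genuine matrix) is automatic here since the generalized eigenspace is one-dimensional once we are in the generic regime.

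The main obstacle, as flagged above, is twofold. First, pinning down precisely which vector plays the role of $y_1$ in the Deconinck–Kapitula construction: the defining condition $\partial_x\mathcal{L}\,{\rm span}\{y_i\}=\ker(\mathcal{L}|_{H_1})$ must be reconciled with the Jordan-chain picture of Proposition~\ref{prop:kernel}, and one must check that the $u_x$-direction does not contribute to $D$ (it should drop out because $u_x\in\ker\mathcal{L}$, so any $\langle u_x,\mathcal{L}\,\cdot\,\rangle=\langle\mathcal{L}u_x,\cdot\rangle=0$, making $D$ effectively the single entry $\langle\phi_2,\mathcal{L}\phi_2\rangle$). Second, the determinant bookkeeping in evaluating $\langle u,\phi_2\rangle$: one must carefully expand the $3\times3$ Jacobian, substitute the inner products $\langle u,u_{(\cdot)}\rangle=\tfrac12 P_{(\cdot)}$, and recognize the resulting combination as $\{T,M,P\}_{a,E,c}$ up to the correct sign and factor — the sign conventions between $\{\cdot,\cdot\}_{E,a}$ and $\{\cdot,\cdot\}_{a,E}$ and the placement of rows/columns in the $3\times3$ determinant are where errors would creep in. Everything else is routine linear algebra and integration by parts.
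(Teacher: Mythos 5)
Your approach is the same as the paper's: take $y_1=\phi_2$ (the $3\times3$ Jacobian-determinant element of the generalized kernel, normalized to mean zero), note $D$ is then the scalar $\langle\phi_2,\mathcal{L}\phi_2\rangle$, and evaluate it using $\mathcal{L}\phi_2=\{T,M\}_{E,c}-\{T,M\}_{E,a}u$ together with $\langle\phi_2,1\rangle=0$. The paper simply asserts the resulting value, so your sketch of the remaining evaluation of $\langle u,\phi_2\rangle$ is the right way to fill in that step, and the conclusion about $n(D)$ follows since only the sign of $D$ matters.

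Three points in your write-up need repair, though none is fatal. First, the identification you flag as "the main obstacle" resolves cleanly, but not the way you first state it: generically $\phi_1=\{u,T\}_{a,E}$ is \emph{not} in $H_1$, since $\int_0^T\phi_1\,dx=\{T,M\}_{a,E}\neq 0$ (this is exactly the observation in Proposition \ref{prop:count2}); rather $\ker(\mathcal{L}|_{H_1})={\rm span}\{u_x\}$, and since $\partial_x\mathcal{L}\phi_2=-\{T,M\}_{a,E}\,u_x\neq0$ the Deconinck--Kapitula condition forces $y_1=\phi_2$ and $D$ to be $1\times1$. Second, the identities $\langle u,u_E\rangle=\tfrac12P_E$, etc., are not exact: because the period depends on $(a,E,c)$ one has $\langle u,u_E\rangle=\tfrac12\bigl(P_E-u(0)^2T_E\bigr)$ and similarly for $a,c$; the boundary contributions do cancel when inserted into the cofactor expansion of $\phi_2$ (they assemble into a determinant with a repeated column), but this must be said, since the uncorrected identities are false. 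Third, the factor bookkeeping: $\{T,M\}_{E,a}=-\{T,M\}_{a,E}$ supplies only a sign, not a factor of two, so carrying out your computation gives $\langle u,\phi_2\rangle=-\tfrac12\{T,M,P\}_{a,E,c}$ and hence $D=-\tfrac12\{T,M\}_{a,E}\{T,M,P\}_{a,E,c}$. This differs from the stated formula by a positive constant, which is immaterial: the basis vector $y_1$ is only determined up to scale, so $n(D)$, and with it the count in Theorem \ref{thm:index}, is unchanged — but the cancellation mechanism you anticipated is not the correct explanation.
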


\begin{proof}
Under the assumptions of Theorem \ref{thm:index}, we know that ${\rm ker}(\mathcal{L})={\rm span}\{u_x\}$
and
\[
\partial_x\mathcal{L}|_{H_1}\left| \begin{array}{ccc} u_E & T_E & M_E\\ u_a & T_a & M_a \\
u_c & T_c & M_c\end{array}\right|=-\{T,M\}_{a,E}u_x
\]
from Proposition \ref{prop:kernel}.  It follows that the matrix $D$ is 
a real number in this case, with value equal to
\[
D=\left<\left| \begin{array}{ccc} u_E & T_E & M_E\\ u_a & T_a & M_a \\
u_c & T_c & M_c\end{array}\right|,\mathcal{L}\left| \begin{array}{ccc} u_E & T_E & M_E\\ u_a & T_a & M_a \\
u_c & T_c & M_c\end{array}\right|\right>=-\{T,M\}_{a,E}\{T,M,P\}_{a,E,c}
\]
as claimed.

To complete the proof of proposition \ref{prop:count2} note that the 
results of Bronski and Johnson imply the following identity:
\[
k_{\mathbb{R}} \equiv \left\{\begin{array}{cc} 0 {\rm mod}~ 2 & \{T,M,P\}_{a,E,c} >0 \\  1 {\rm mod}~ 2 & \{T,M,P\}_{a,E,c} <0\end{array} \right.
\]
Since $k_{\mathbb I}^-$ and $k_{\mathbb C}$ are even they do not change the count modulo two. In the case $\{T,M\}_{a,E}$ is non-vanishing the count is determined 
to within one, and is thus the count is exact if one knows the parity. 
Applying the result of Bronski and Johnson thus determines the count.
\end{proof}

\begin{remark}
It should be noted that the quantity $D$ computed in Lemma \ref{D} also arose naturally
in \cite{MJ1} when considering orbital stability
of periodic traveling wave solutions of \eqref{gkdv} to perturbations with
the same periodic structure.  There, the negativity of $D$ was necessary in order to prove the quadratic form
induced by $\mathcal{L}$ acting on $L^2({\mathbb T}_k)$ was positive definite
on an appropriate subspace.  As the methods therein are based on classical energy functional calculations,
such a requirement was necessary to classify the periodic traveling wave as a local minimizer of the Hamiltonian
subject to the momentum and mass constraints.
\end{remark}

The proof of Theorem \ref{thm:index} is now complete.  Notice that Theorem \ref{thm:index} gives a sufficient
requirement for a spatially periodic traveling wave of \eqref{gkdv} to be orbitally stable
in $L^2({\mathbb T}_k)$ for any $k\in\mathbb{N}$ and any sufficiently smooth nonlinearity
$f$.  In the next two sections, we analyze Theorem \ref{thm:index} in the case of a power-nonlinearity
by using complex analytic methods to reduce the expression for the Jacobians involved in \eqref{index}
in terms of moments of the underlying wave itself.  This has the obvious advantage of being more susceptible
to numerical experiments as one no longer has to numerically differentiate approximate solutions
with respect to the parameters $a$, $E$, and $c$.  We will also discuss the computation of $\Delta_{\rm MI}$
for power-law nonlinearities.  In particular, we will prove a new theorem in the case of the focusing
and defocusing MKdV which relates the modulational stability of a spatially periodic traveling wave
to the number of distinct families of periodic solutions existing for the given parameter values.

\section{Polynomial Nonlinearities and the Picard-Fuchs System}

One major simplification of this theory occurs when the nonlinearity $f(u)$
is polynomial. In this case the fundamental quantities $(T,M,P)$
are given by Abelian integrals of the first, second or third kind
on a Riemann surface. While we cannot give a detailed exposition of
this theory here the basics are very straightforward. Suppose that
$P(u)=a_0 + a_1 u + \ldots a_n u^n$ is a polynomial of degree $n$.
If the polynomial is of degree $2g+2$ or $2g+1$ then the quantity
$\frac{u^kdu}{\sqrt{P(u)}}$ is an Abelian differential on a Riemann
surface of genus $g$. If we define
the $k^{\rm th}$ moment $\mu_k$ of the solution $u(x)$ as follows:
\[
\mu_k = \int_0^T u^k(x) dx = \oint_\gamma \frac{u^k du}{\sqrt{P(u)}}
\]
then one obviously has
\[
\frac{d \mu_k}{d a_j} = \frac{d \mu_j}{d a_k} =\oint_\gamma \frac{u^{k+j} du}{P^{\frac{3}{2}}(u)} =: I_{k+j}
\]
for any loop $\gamma$ in the correct homotopy class. For our purposes we
are interested in branch cuts on the real axis though none of what will
be said in this section assumes this.
In the context of the stability problem one only needs $I_0\ldots I_4,$
since $T_E,T_a, \ldots P_c$ can all be expressed in terms of these five
quantities, but the theory requires that one consider all such moments.
The main observation is that the above integrals $\{I_k\}$ are
again Abelian integrals and thus can be expressed in terms of $\{\mu_k\}.$

In practice the simplest way to do this is to use the identity
 \[
\mu_m = \oint \frac{u^m P(u) du}{P^{\frac{3}{2}}(u)} = \sum_{j=0}^n a_j I_{j+m}
\]
for $m\in (0..n-1)$ and
\begin{align}
& \oint \frac{u^m P'(u) du}{P^{\frac{3}{2}}(u)} = 2 m \oint \frac{u^{m-1}du}{\sqrt{P(u)}} = 2 n \mu_{m-1}\\
& \sum_{j=0}^n j a_j I_{j+m-1} =  2 n \mu_{m-1}
\end{align}
for $m \in \{0,1,...,n\}$. This gives a linear system of $2n-1$ equations in
 $2n-1$ unknowns $\{I_k\}_{k=0}^{2n-1}$:
\[
\left(\begin{array}{ccccccc}a_0 & a_1 & \ldots & a_n & 0 & 0 & \ldots  \\
0 & a_0 &  a_1 & \ldots & a_n & 0 & \ldots \\
\vdots & \ddots & \ddots & \ddots &  \ddots & \ddots & \ddots\\
0 & \ldots & 0 & a_0 & a_1 & \ldots & a_n \\
a_1 & 2 a_2 & \ldots & n a_n & 0 & 0 & \ldots \\
0 & a_1 & 2 a_2 & \ldots & n a_n & 0 & \ldots \\
\vdots & \ddots & \ddots & \ddots &  \ddots & \ddots & \ddots\\
0 & \ldots & 0 & a_1 & 2a_2 & \ldots & na_n
\end{array}\right)\left(\begin{array}{c} I_0 \\ I_1 \\ I_2 \\ I_3 \\ \vdots \\ I_{2n-3} \\ I_{2n-2} \end{array}\right) =\left(\begin{array}{c} \mu_0 \\ \mu_1 \\ \mu_2 \\ \mu_3 \\ \vdots \\ \mu_{n-1} \\ \mu_n \end{array}\right)
\label{eqn:system}
\]
The matrix  which arises in the above linear systems is the Sylvester matrix
of $P(u)$ and $P'(u)$. It is a standard result of commutative algebra that
the Sylvester matrix of $P(u)$ and $Q(u)$ is singular if and only if the
polynomials $P$ and $Q$ have a common
root. In our case $P(u)$ and $P'(u)$ having a common root is equivalent to
$P(u)$ having a root of higher multiplicity. In the case where $P$ has a
multiple root the a pair of branch points degenerate to a pole and the
genus of the surface decreases by one. We will later work an example where this
occurs.

For a given polynomial it is rather straightforward to work these out,
particularly with the aid of computer algebra systems. In this paper we
did many of the more laborious calculations with Mathematica.\cite{Mathematica}
Some examples are presented in the next section.

\section{Examples}
\subsection{The Korteweg -de Vries Equation (KdV-1)}
The Korteweg-Devries (KdV) equation
\[
u_t + u_{xxx} + (u^2)_x =0
\]
is, of course, completely integrable. The spectrum of the linearized flow can
in principle be understood by the machinery of the inverse scattering
transform, in particular by the construction via Baker-Akheizer
functions detailed in the text of Belokolos, Bobenko, Enolskii, Its
and  Matveev\cite{BBEIM}).
Nevertheless this problem provides a good test for our methods, which
we believe to be considerably simpler and easier to calculate than the
algebro-geometric approach.

Assuming a  traveling wave $u(x-ct)$ the ordinary differential equation
integrates up to
\begin{align}
& - c u_x + u_{xxx} + (u^2)_x =0 \\
& u_{xx}  = a + c u - u^2 = -V'(u)\\
\frac{u_x^2}{2} &= E + a u + c\frac{u^2}{2} - \frac{u^3}{3}=: E - V(u)
\end{align}
where $V(u)=V(u;a,c)$ denotes the effective potential for the Hamiltonian system.
A fundamental quantity is the discriminant of the polynomial $R(u;a,E,c):=E-V(u;a,c)$,
which is given by
\[
{\rm disc}(R(u;a,E,c)) = \frac{1}{12} \left(16 a^3+3 a^2 c^2-36 E a c-6 E c^3-36
   E^2\right).
\]
Notice the KdV equation has periodic solutions if and only if
${\rm disc}(R(u;a,E,c))$ is positive.  Moreover, by scaling (and possibly a map $u \mapsto -u$)
the wave speed $c$ can be assumed to be $c=+1$.

In this case, the Picard-Fuchs system is the following set of five linear equations:
\[
\left(
  \begin{array}{ccccc}
    E & a & c/2 & 1/3 & 0 \\
    0 & E & a & c/2 & 1/3 \\
    a & c & -1 & 0 & 0 \\
    0 & a & c & -1 & 0 \\
    0 & 0 & a & c & -1 \\
  \end{array}
\right)
\left(
  \begin{array}{c}
    I_0 \\
    I_1 \\
    I_2 \\
    I_3 \\
    I_4 \\
  \end{array}
\right)
=
\left(
  \begin{array}{c}
    T \\
    M \\
    0 \\
    2T \\
    4M \\
  \end{array}
\right),
\]
where
\[
\mu_k=\oint\frac{u^k~du}{\sqrt{2R(u;a,E,c)}}~~{\rm and}~~
I_k=\oint\frac{u^k~du}{\left(2R(u;a,E,c)\right)^{3/2}}.
\]
Solving this system implies the various Jacobians arising in Theorem \ref{thm:index} and the modulational
stability index \eqref{MI} can be expressed in terms of the period
$T$ and the mass $M$ as follows:

\begin{align*}
T_E &= \frac{\left(4 a + c^2\right)M + \left(6E+ac\right)T}{12 {\rm disc}(R(u,a,E,c))} \\
\{T,M\}_{a,E} 
&=-\frac{T^2 V'(\frac{M}{T})}{12 ~{\rm disc}(R(u,a,E,c))}\\
\{T,M,P\}_{a,E,c} 
&= \frac{ T^3 (E - V(\frac{M}{T}))}{2 ~{\rm disc}(R(u,a,E,c))}\\
2\Delta_{MI} 
&= \frac{\left(\alpha_{3,0}T^3 +\alpha_{2,1}T^2 M + \alpha_{1,2}T M^2 + \alpha_{0,3}M^3\right)^2}{2^{10}3^7~{\rm disc}^3(R(u,a,E,c))}
\end{align*}
where
\begin{align*}
\alpha_{3,0} &= 36 E + 18 a E c - 8 a^3 \\
\alpha_{2,1} &= 18 E c^2 - 6 a^2 c + 36 a E \\
\alpha_{1,2} &= (-18 c E + 24 a^2 + 3 a c^2 \\
\alpha_{0,3} &= c^3 + 6 a c + 12 E
\end{align*}
These quantities are all positive. The positivity of $T_E$ follows from the result of Schaaf\cite{schaaf} mentioned earlier. The non-negativity of
$\Delta_{MI}$ is clear: in principle the cubic polynomial in the numerator
could vanish but numerics shows that it does not in the region where ${\rm disc}(R(u,a,E,c))>0.$ The positivity of $\{T,M,P\}_{a,E,c}$ is clear. Finally $\{T,M\}_{a,E}$ is positive from Jensen's inequality since
\[
\oint \frac{V'(u)}{\sqrt{R(u;a,E,c)}} =0.
\]

\begin{remark}
Recall from \cite{BrJ} that the Jacobian $\{T,M,P\}_{a,E,c}$ arises naturally as an orientation index
for the gKdV linearized spectral problem for a sufficiently smooth
nonlinearity.  Indeed, one has that $\{T,M,P\}_{a,E,c}<0$ is sufficient
to imply the existence of a non-zero real periodic eigenvalue of the linearized operator
${\bf J}\mathcal{L}$, i.e. an unstable real eigenvalue in $L^2_{\rm per}[T/2,T/2]$.
Moreover, from \cite{MJ1} it follows that if $T_E>0$, then such an eigenvalue
can not exist if $\{T,M,P\}_{a,E,c}$ is positive: however, no such claim can be made in the case
where $T_E<0$.
\end{remark}

Theorem \ref{thm:index} now implies the following index result: if one considers the
linearized operator acting on $L^2({\mathbb T}_k)$ for $k \in {\mathbb N}$
then $u_x$ has $2k$ roots in ${\mathbb T}_k$ and
the number of real eigenvalues, complex eigenvalues, and imaginary
eigenvalues of negative Krein signature satisfy
\begin{equation}
k_{\mathbb I}^- + k_{\mathbb R} + k_{\mathbb C} = 2(k-1)\label{count:kdv}
\end{equation}
In particular when $k=1$, so one is considering stability to perturbations
of the same period, the only eigenvalues lie on the imaginary
axis and have positive Krein signature thus proving orbital
stability of such solutions in $L^2({\mathbb T}_k)$. Furthermore, considered as an
operator on $L^2[-\infty,\infty]$ the spectrum in a neighborhood
of the origin in the spectral domain consists of the imaginary axis
with multiplicity three, thus implying modulational stability of the periodic
traveling wave solutions of the KdV equation.  

\subsection{Example: Modified Korteweg- de Vries (KdV-2)}

The MKdV equation
\[
u_t + u_{xxx} \pm (u^3)_x = 0
\]
arises as a model for wave propagation in plasmas and
as a model for the propagation of interfacial waves in a
stratified medium. It is also integrable and the same caveats apply as for
the KdV regarding the algebro-geometric construction of the spectrum of the
linearized operator.
The MKdV is invariant under the scaling
$x \mapsto \alpha x,t \mapsto \alpha^3 t, u \mapsto \alpha^{-\frac{2}{3}} u,$
and thus the wavespeed $c$ can be scaled to be $c = 0,\pm 1.$ The most
physically and mathematically interesting case is the focusing MKdV (the plus
sign above) with rightmoving waves where $c$ can be scaled to $+1$.
If we scale
the focusing MKdV equation such that $c=1$ the $a,E$ parameter space contains
the familiar swallow-tail fold: see Figure \ref{fig:swallow}.

\begin{figure}
\centering
\includegraphics[scale=.32]{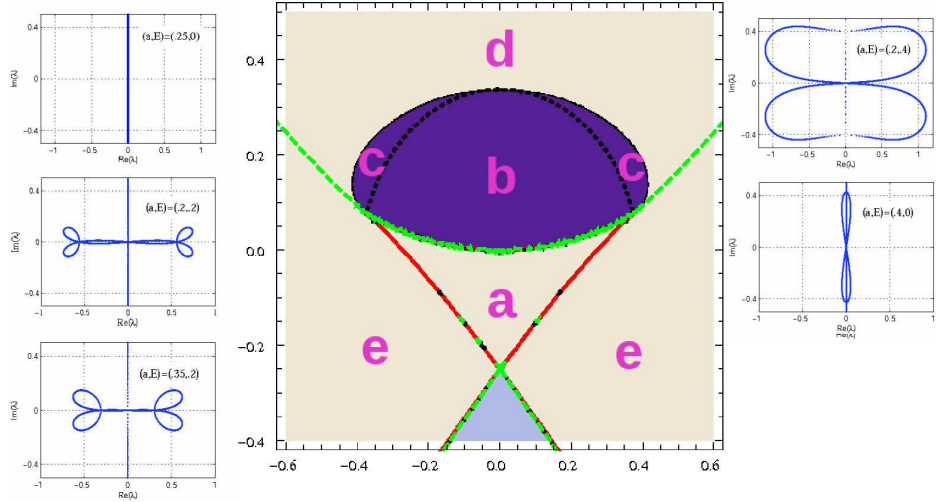}
\caption{The configuration space for focusing MKdV with $c=+1$. The swallowtail
figure divides the plane into regions containing $0,1,$ and $2$ periodic
solutions. The domain is colored according to the sign of $\{T,M,P\}_{a,E,c}$.}
\label{fig:swallow}
\end{figure}

For the focusing MKdV, the swallowtail curve is defined implicitly by the equation
\[
{\rm disc}(E + a u + u^2/2 - u^4/4) = 2 a^2 - 27 a^4 - 4 E + 72 a^2 E - 32 E^2 - 64 E^3 =0
\]
or by the polynomial parametric representation
\begin{align}
a &= s-s^3 \\
E &= \frac{s^2}{2}-\frac{3s^4}{4}.
\end{align}
The soliton solution corresponds to the origin $a=0,E=0.$ Along the upper
(dashed) branch ($s \in [-\frac{\sqrt{3}}{3},\frac{\sqrt{3}}{3}]$) there
are two solutions: a constant solution
and a solitary waves homoclinic to some (non-zero) constant value. Along the
lower (dotted) branch $s \in [-1,-\frac{\sqrt{3}}{3}] \cup [\frac{\sqrt{3}}{3},1]$ there are again two solutions: a constant solution and a non-constant
solution. Along the remaining portions of the curve there is only a
constant solution.  The Riemann surface associated to
the traveling wave solutions is of genus one (a torus)
\[
\frac{u_x^2}{2} = E + a u + \frac{u^2}{2} - \frac{u^4}{4}
\]
except along the swallowtail curve where the discriminant vanishes. In the
case of vanishing discriminant
the torus ``pinches off'' and degenerates to a cylinder. In this case all of
the elliptic integrals can be evaluated in terms of elementary functions.
Unlike the KdV case, where the only periodic solutions on the curve of
vanishing discriminant are constant, the MKdV admits non-trivial periodic
solutions on the swallowtail curve.

The modulational instability index turns out, in this case, to be
particularly simple. After solving the Picard-Fuchs system
\[
\left(
                 \begin{array}{ccccccc}
                  E & a & \frac{1}{2} & 0 & -\frac{1}{4} & 0 & 0 \\
                  0 & E & a & \frac{1}{2} & 0 & -\frac{1}{4} & 0 \\
                  0 & 0 & E & a & \frac{1}{2} & 0 & -\frac{1}{4} \\
                  a & 1 & 0 & -1 & 0 & 0 & 0 \\
                  0 & a & 1 & 0 & -1 & 0 & 0 \\
                  0 & 0 & a & 1 & 0 & -1 & 0 \\
                  0 & 0 & 0 & a & 1 & 0 & -1
                 \end{array}
                 \right) \left(\begin{array}{c}I_0 \\ I_1 \\ I_2 \\ I_3 \\ I_4 \\ I_5 \\ I_6 \end{array}\right) = \left(\begin{array}{c}T \\ M \\ P \\ 0 \\ 2T \\ 4M \\ 6P \end{array}\right)
\]
one finds the following expressions for the various Jacobians:
\begin{align*}
T_E &= -\frac{(3 a^2  - 16 E^2 - 4 E )T + (9 a^2 - 4 E  - 1)M}{16 ~{\rm disc}(R(u;a,E,1))}\\
\{T,M\}_{a,E} &= -\frac{(3 a^2 - 4 E ) T^2 + (4 E - 1) P T +  P^2}{16 ~{\rm disc}(R(u;a,E,1))}\\
\{T,M,P\}_{a,E,c} &= -\frac{(2a^2 - 4 E )T^3 + 4 E P T^2 -  T P^2 + P^3}{32 ~{\rm disc}(R(u;a,E,1))}\\
\Delta_{MI} &= \frac{\left(\alpha_{0,3}T^3 + \alpha_{1,2}T^2 P + \alpha_{2,1} T P^2 + \alpha_{3,0}P^3\right)^2}{4194304~{\rm disc}(R(u;a,E,1))^3}
\end{align*}
where
\[
R(u;a,E,1):=E + a u + u^2/2 - u^4/4
\]
as before and
\begin{align*}
\alpha_{3,0} &= (1+36 E - 27 a^2) \\
\alpha_{2,1} &= (27 a^2 + 144 E^2 - 60 E) \\
\alpha_{1,2} &= (36 E - 240 E^2 - 18 a^2 + 108 E) \\
\alpha_{0,3} &=  (54 a^4-180 a^2 E + 144 E^2 + 64 E).
\end{align*}

We note a few things. First notice that, while the Picard-Fuchs system
involves $T$, $M$, and $P$ the resulting Jacobians only involve $T$ and $P$. While this
is not obvious from the point of view of linear algebra there is a clear
complex analytic reason why this must be so: the Abelian differentials
defining $T$ and $P$ have zero residue about the point at infinity, as
do $T_E,T_a \ldots P_c,$ while $M$ has a non-vanishing residue at
infinity. Thus $T_E,T_a \ldots P_c$ must be expressible in terms of only
$T$ and $P$.

Secondly we note that while there are two distinct families of solutions inside
the swallowtail they have the same orientation index $\{T,M,P\}_{a,E,c}$ and modulational
instability index $\Delta_{MI}$. This is special to MKdV, and follows from the
fact that the integrals over one real cycle can be simply related to the
integrals over the other real cycle via
\begin{align}
\oint_{\gamma_{\rm left}} \frac{du}{\sqrt{R(u;a,E,c)}} &= \oint_{\gamma_{\rm right}} \frac{du}{\sqrt{R(u;a,E,c)}} \\
\oint_{\gamma_{\rm left}} \frac{u ~du}{\sqrt{R(u;a,E,c)}} &= \oint_{\gamma_{\rm right}} \frac{u~ du}{\sqrt{R(u;a,E,c)}} + 2 \sqrt{2}\pi\\
\oint_{\gamma_{\rm left}} \frac{u^2~ du}{\sqrt{R(u;a,E,c)}} &= \oint_{\gamma_{\rm right}} \frac{u^2~du}{\sqrt{R(u;a,E,c)}}
\end{align}
by deforming the contour onto the other cycle and picking up the contribution
from the residue at infinity (which vanishes for $T$ and $P$ and is one for $M$).
Since the orientation and modulational instability indices are built of derivatives
of the above quantities these indices must be the same for both families
of solutions. This observation extends
the calculation of Haragus and Kapitula \cite{HK} for the zero amplitude
waves to the periodic waves on the swallowtail curve.

As in the KdV case the modulational instability index, which is a homogeneous
polynomial of degree $6$ in $T$ and $P$, can be expressed as the square of a
homogeneous polynomial of degree $3$ over an odd power of the discriminant
of the polynomial $R(u;a,E,c)$.  A similar expression holds in the defocusing case, as well
as for general values of $c$.  The sign of this quantity
is obviously the 
same as of the sign of the discriminant
of the quartic, which is in turn 
positive if the quartic has no real roots
or 4 real roots, and 
negative if the quartic has two only real roots. Thus we establish the following surprising fact:

\begin{thm}\label{thm:mkdv}
The traveling wave solutions to the MKdV equation
\[
u_t + u_{xxx} \pm (u^3)_x
\]
are modulationally unstable for a given set of
parameter values if the polynomial
\[
E + a u + c u^2/2 \pm u^4/4
\]
has two real roots, and is modulationally stable if it has four real roots.
\end{thm}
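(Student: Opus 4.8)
The plan is to read the theorem directly off the explicit formula for the modulational instability index $\Delta_{\rm MI}$ obtained above, together with the dichotomy of Bronski and Johnson recalled in Section~2: if $\Delta_{\rm MI}>0$ the spectrum of ${\bf J}\mathcal{L}$ near the origin is a triple-covered symmetric segment of the imaginary axis, so the wave is modulationally stable, whereas if $\Delta_{\rm MI}<0$ two spectral branches leave the imaginary axis with nonzero slope, so the wave is modulationally unstable. Thus everything reduces to the sign of $\Delta_{\rm MI}$. Solving the Picard--Fuchs system for the focusing MKdV with $c=1$ gave
\[
\Delta_{MI} = \frac{\left(\alpha_{0,3}T^3 + \alpha_{1,2}T^2 P + \alpha_{2,1} T P^2 + \alpha_{3,0}P^3\right)^2}{4194304~{\rm disc}(R(u;a,E,1))^3},
\]
and the analogous formula in the defocusing case and for general $c$ has the same shape, namely a perfect square divided by an odd power of ${\rm disc}(R)$, where $R(u)=E+au+cu^2/2\pm u^4/4$. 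Since the numerator is manifestly nonnegative, the sign of $\Delta_{\rm MI}$ agrees with the sign of ${\rm disc}(R)$ at every parameter value where $\Delta_{\rm MI}\neq 0$.

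It then remains to translate the sign of ${\rm disc}(R)$ into a statement about the roots of $R$. Here I would invoke the classical fact that a real quartic has positive discriminant exactly when its four roots are all real or all non-real, and negative discriminant exactly when it has two real roots and one complex-conjugate pair. To eliminate the ``all non-real'' alternative I would use the dynamics: any genuine periodic traveling wave of \eqref{gkdv} oscillates between two turning points $u_\pm$ at which $u_x=0$, hence at which $R$ vanishes, and between which $R>0$; so $R$ has at least two real roots, the ``no real root'' case cannot support a traveling wave since $R$ is a downward quartic, and ${\rm disc}(R)=0$ corresponds to the degenerate boundary (constant and solitary-wave limits) excluded by the hypotheses. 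Consequently, on the locus of parameters admitting a genuine periodic traveling wave, ${\rm disc}(R)>0$ is equivalent to the quartic having four real roots and ${\rm disc}(R)<0$ to its having exactly two. Combining this with the previous paragraph: two real roots force ${\rm disc}(R)<0$, hence $\Delta_{\rm MI}<0$ and modulational instability; four real roots force ${\rm disc}(R)>0$, hence $\Delta_{\rm MI}>0$ and modulational stability.

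The one place where I expect to do genuine work is ruling out the borderline locus $\Delta_{\rm MI}=0$, i.e. the vanishing of the homogeneous cubic $\alpha_{0,3}T^3+\alpha_{1,2}T^2P+\alpha_{2,1}TP^2+\alpha_{3,0}P^3$ in the moments, on which the Bronski--Johnson normal form is inconclusive. Exactly as in the KdV subsection, I would check that this cubic does not vanish on the relevant region of parameter space; the cleanest route is to combine the Picard--Fuchs relations with elementary constraints on the moments $T$ and $P$ coming from the location of the turning points (for instance positivity of $T$), falling back on a numerical verification over the region where ${\rm disc}(R)$ has the appropriate sign if a purely algebraic argument proves elusive. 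Once this is in hand the sign of $\Delta_{\rm MI}$ is strictly that of ${\rm disc}(R)$ and the theorem follows, the defocusing sign and general wavespeed $c$ being handled verbatim from the corresponding $\Delta_{\rm MI}$ formula.
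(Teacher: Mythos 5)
Your proposal follows essentially the same route as the paper: read the sign of $\Delta_{\rm MI}$ off its representation as a perfect square of a homogeneous cubic in $T,P$ over an odd power of ${\rm disc}(R)$, invoke the Bronski--Johnson dichotomy for the spectrum near the origin, and translate the sign of the quartic discriminant into the two-versus-four real root alternative. If anything you are more careful than the paper, which silently dismisses the no-real-root case and the possible vanishing of the numerator cubic (handled only numerically in the KdV example), so the proposal is correct and matches the paper's argument.
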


\begin{remark}
In the case of focusing MKdV, Theorem \ref{thm:mkdv} implies
that if the parameter values give rise to one periodic solution
then this solution is unstable to perturbations of sufficiently
long wavelength. If there are two periodic solutions then the
spectrum of the linearization about one of these solutions in the
neighborhood of the origin consists of the imaginary axis
with multiplicity three. For the case of defocusing
MKdV the situation is reversed: for a given set of parameter values there
can be at most one periodic solution, which has no spectrum off of the imaginary
axis in the neighborhood of the origin.
\end{remark}

Note that while this problem is in principle completely solvable using
algebro-geometric techniques, Theorem  \ref{thm:mkdv} appears to be new.
We suspect this is because the classical algebro-geometric calculations are
sufficiently complicated that they are difficult to do in general. For
examples of this sort of calculation see the original text of
Belokolos et. al.\cite{BBEIM} as well as the papers of Bottman and Deconinck\cite{BD}
and Deconinck and Kapitula\cite{DK}.

We now summarize the more interesting situation of the focusing MKdV 
in Figure \ref{fig:swallow} and below:
\begin{itemize}
\item[(a)] There are two families of solutions in this region. For both
of these solutions the modulational instability index is positive and
thus in a neighborhood of the origin the imaginary axis is in the
spectrum with multiplicity three.
Solutions in this region have $T_E>0$, $\{T,M\}_{a,E}>0$, and $\{T,M,P\}_{a,E,c}>0$
implying $k_\mathbb{I}^-+k_\RM + k_\CM= 2(k-1).$

\item The solutions in the remaining regions have a modulational instability index that is negative showing that they are always unstable to perturbations of sufficiently long wavelength.

\item[(b)] In this region $T_E<0$, $\{T,M\}_{a,E}<0$, and $\{T,M,P\}_{a,E,c}<0$ implying
  $k_\mathbb{I}^-+k_\RM + k_\CM= 2k-1$.  

\item[(c)] In this region $T_E<0$, $\{T,M\}_{a,E}>0$, and $\{T,M,P\}_{a,E,c}<0$  implying
  $k_\mathbb{I}^-+k_\RM + k_\CM= 2k-1.$ 
As one crosses between regions $b$ and $c$ the indices
$n(\mathcal L)|_{H^1})$ and $n(D)$ both increase (resp. decrease) by one, leaving the total count the same.

\item[(d)] In this region $T_E<0$, $\{T,M\}_{a,E}>0$, and $\{T,M,P\}_{a,E,c}>0$  implying
  $k_\mathbb{I}^-+k_\RM + k_\CM= 2(k-1).$

\item[(e)] In this region  $T_E>0$, $\{T,M\}_{a,E}>0$, and $\{T,M,P\}_{a,E,c}>0$  implying
  $k_\mathbb{I}^-+k_\RM + k_\CM= 2(k-1).$
\end{itemize}
In regions $(a)$, $(d)$ and $(e)$ when considering periodic perturbations ($k=1$)
one finds that $k_\mathbb{I}^-+k_\RM + k_\CM=0$ implying both spectral and orbital stability
in $L^2({\mathbb T}_1)$.  However, as mentioned above, in regions $(d)$ and
$(e)$ the solution is spectrally unstable in $L^2({\mathbb T}_k)$
for $k\in\mathbb{N}$ sufficiently large.  Moreover, in regions $b$ and $c$
there always exists a non-zero real periodic eigenvalue,
i.e. the linearized operator ${\bf J}\mathcal{L}$ acting on $L^2({\mathbb T}_k)$
always has a non-zero real eigenvalue and hence such solutions are always spectrally
unstable.

\begin{remark}
It should be noted that the above counts are consistent with the calculations of 
Deconinck and Kapitula \cite{DK} in which they consider stability of the cnoidal wave solutions
\[
U(x,t;\kappa)=\sqrt{2}\mu\cn\left(\mu x-\mu^2(2-\kappa^2)t;k\right)
\]
of the focusing MKdV equation, where $\mu>0$ and $\kappa\in[0,1)$.  Such solutions always correspond to regions
$(b)$ and $(d)$ along with the constraint $a=0$.  There, the authors find numerically
that there is a critical elliptic-modulus $\kappa^*\approx 0.909$ such that solutions
with $0\leq \kappa<\kappa^*$, corresponding to region $(b)$ are orbitally stable in $L^2({\mathbb T}_k)$
while solutions with $\kappa^*<\kappa<1$, corresponding to region $(d)$ 
are spectrally unstable in $L^2({\mathbb T}_k)$ for all $k\in\mathbb{N}$ 
due to the presence of a non-zero real eigenvalue of the linearized operator.  

\end{remark}

\subsection{Example:  $L^2$  critical Korteweg-de Vries (KdV -4)}

Finally we consider the equation
\[
u_t + u_{xxx} + (u^5)_x = 0.
\]
This equation is not a physical model for any system that we are
aware of but is mathematically interesting for a number of reasons. This is
the power where
the solitary waves first go unstable. Equivalently this is the $L^2$
critical case, where one has the scaling $u \mapsto \sqrt{\gamma} u(\gamma x)$
preserving the $L^2$ norm and the relative contributions of the kinetic
and potential energy to the Hamiltonian.  Again we focus on the focusing case,
which is the more interesting, and we scale everything so that $c=+1$.

Again the parameter space is divided by a swallowtail curve into
regions containing no periodic solution, one periodic solution, and
two periodic solutions. The implicit representation is given by
\[
-48 a^2 + 3125 a^6 + 96 E -11250 a^4 E + 10800a^2 E^2 - 1728 E^3 + 7776 E^5=0
\]
or parametrically by
\begin{align}
a &= s^5-s \\
E &= \frac{s^2}{2} - \frac{5 s^6}{6}.
\end{align}
Again the picture is qualitatively similar to the MKdV case: the
portion of the swallowtail parameterized by $s \in (-5^{-\frac{1}{4}},5^{-\frac{1}{4}})$
represents parameter values for which there are two solutions: one constant
and one homoclinic
to a constant, with the origin representing the soliton solution (the solution
homoclinic to zero) and the zero solution. The portions of the curves
parameterized by $s \in (-\frac{1}{\sqrt[4]{5}},-1)\cup(\frac{1}{\sqrt[4]{5}},1)$ represent
parameter values for which there are two solutions: a constant and a periodic
solution. The remainder of the curve represents parameter values for which
there is only the constant solution.

The Picard-Fuchs system is following set of eleven equations:
\[
\left(
   \begin{array}{ccccccccccc}
    E & a & \frac{1}{2} & 0 & 0 & 0 & -\frac{1}{6} & 0 & 0 & 0 & 0 \\
    0 & E & a & \frac{1}{2} & 0 & 0 & 0 & -\frac{1}{6} & 0 & 0 & 0 \\
    0 & 0 & E & a & \frac{1}{2} & 0 & 0 & 0 & -\frac{1}{6} & 0 & 0 \\
    0 & 0 & 0 & E & a & \frac{1}{2} & 0 & 0 & 0 & -\frac{1}{6} & 0 \\
    0 & 0 & 0 & 0 & E & a & \frac{1}{2} & 0 & 0 & 0 & -\frac{1}{6} \\
    a & 1 & 0 & 0 & 0 & -1 & 0 & 0 & 0 & 0 & 0 \\
    0 & a & 1 & 0 & 0 & 0 & -1 & 0 & 0 & 0 & 0 \\
    0 & 0 & a & 1 & 0 & 0 & 0 & -1 & 0 & 0 & 0 \\
    0 & 0 & 0 & a & 1 & 0 & 0 & 0 & -1 & 0 & 0 \\
    0 & 0 & 0 & 0 & a & 1 & 0 & 0 & 0 & -1 & 0 \\
    0 & 0 & 0 & 0 & 0 & a & 1 & 0 & 0 & 0 & -1
   \end{array}
   \right)\left(\begin{array}{c}I_0 \\I_1 \\ I_2 \\ I_3 \\ I_4 \\ I_5 \\ I_6 \\ I_7 \\ I_8 \\ I_9 \\ I_{10} \end{array}\right) = \left(\begin{array}{c}\mu_0 \\ \mu_1 \\ \mu_2 \\ \mu_3 \\ \mu_4 \\ 0 \\ 2\mu_0 \\ 4\mu_1 \\ 6\mu_2 \\ 8\mu_3 \\ 10\mu_4\end{array}\right).
\]
We have explicit expressions for the various Jacobians arising in the theory,
but they are cumbersome and we will no reproduce them here:
it should be noted they are homogeneous polynomials in $\mu_0 (=T)$, $\mu_1 (=M)$, $\mu_3$, and $\mu_4$.
Moreover, we note that these quantities turn out to be independent of $P=\mu_2$
since the differential corresponding to momentum has a non-trivial residue at infinity,
similar to the case of the MKdV.

\begin{figure}
\centering
\includegraphics[scale=.35]{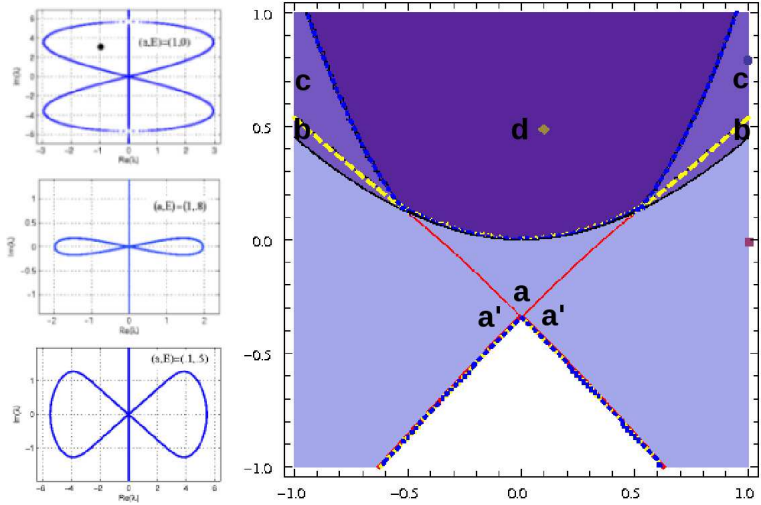}
\caption{The configuration space for focusing KdV-4 with $c=+1$. The
swallowtail figure divides the plane into regions containing $0,1,$ and $2$
periodic solutions.}
\label{fig:swallow4}
\end{figure}

The stability diagram of these solutions is depicted in figure \ref{fig:swallow4}.
Numerics indicate that the modulational instability index is always
negative, indicating that solutions are always modulationally
unstable. There are three curves emerging from the cusps of the swallowtail.
The lowest of these is the curve on which the orientation index $\{T,M,P\}_{a,E,c}$ vanishes,
the middle (dashed) where $T_E$ vanishes and the upper (dotted) where $\{T,M\}_{a,E}$ vanishes.

The behavior in the various regions is summarized as follows:
\begin{itemize}
\item[(a)]There are two solution families in this region, both of
which satisfy $T_E>0$, $\{T,M\}_{a,E}>0$, and $\{T,M,P\}_{a,E,c}>0$.
This implies that there are no real periodic eigenvalues and that $k_{\mathbb I}^- + k_{\mathbb R} + k_{\mathbb C}=2(k-1)$
\item [(a')] There is only one solution family in this region, otherwise the
behavior is the same as in region (a')
\item [(b)] The family of solutions in this region has $T_E>0$, $\{T,M\}_{a,E}>0$, and $\{T,M,P\}_{a,E,c}<0$.
This implies that $k_{\mathbb I}^- + k_{\mathbb R} + k_{\mathbb C}=2k-1$
\item[(c)] The family of solutions in this region has $T_E<0$, $\{T,M\}_{a,E}>0$, and $\{T,M,P\}_{a,E,c}<0$.  This implies that $k_{\mathbb I}^- + k_{\mathbb R} + k_{\mathbb C}=2k-1.$
\item[(d)] The family of solutions in this region has $T_E<0$, $\{T,M\}_{a,E}<0$, and $\{T,M,P\}_{a,E,c}<0$.  This implies that $k_{\mathbb I}^- + k_{\mathbb R} + k_{\mathbb C}=2k-1.$
\end{itemize}
It follows that solutions in region $(a)$ are orbitally stable in $L^2({\mathbb T}_k)$
and spectrally unstable in $L^2({\mathbb T}_k)$ for $k\in\mathbb{N}$ 
sufficiently large.
Moreover, solutions in the remaining regions are spectrally unstable
in $L^2({\mathbb T}_k)$ for any $k\in\mathbb{N}$ due to the presence
of a non-zero real periodic eigenvalue.

\section{Conclusions}
We have proven an index theorem for the linearization of Korteweg-de Vries
type flows around a traveling wave solution and shown that the number of
eigenvalues in the right half-plane plus the number of purely imaginary eigenvalues
of negative Krein signature given be expressed in terms of the
Hessian of the classical action of the traveling wave ordinary differential
equation or (equivalently) in terms of the Jacobian of the map from the
Lagrange multipliers to the conserved quantities. In the case of polynomial
nonlinearity these quantities can be expressed in terms of homogeneous
polynomials in Abelian integrals on a finite genus Riemann surface.

The main drawback of the result is that it does not really
distinguish between the eigenvalues in the right half-plane,
which lead to an instability, and the imaginary eiegnvalues
of negative Krein signature, which are generally not expected to
lead to an instability. The index does distinguish between real
eigenvalues and imaginary eigenvalues of negative Krein signature,
but only modulo two. This is sufficient to deal with the solitary
wave case, where the only possible instability mechanism is the
emergence of a single real eigenvalue from the origin. However
in the periodic problem, where the behavior of the spectral problem is much
richer, it would be preferable to have more information.

We believe that a stronger result is possible: namely that there is
spectrum off of the imaginary axis if and only if the modulational
instability index is negative. It follows from the results of this paper
and the previous work of Bronski and Johnson that negativity of the
modulational instability index is a sufficient condition for instability.
Showing necessity would amount to showing that existence of any spectrum
off of the imaginary axis would imply the existence of spectrum off
of the imaginary axis in a neighborhood of the origin. In numerical
experiments that we have conducted this has always been true.

{\bf Acknowledgements:} JCB gratefully acknowledges support from the National Science Foundation under NSF grant DMS-0807584. MJ gratefully acknowledges 
support from a National Science Foundation Postdoctoral Fellowship. TK gratefully acknowledges the support of a Calvin Research Fellowship and the National Science Foundation under grant DMS-0806636.
\bibliography{GKdVbib}

\end{document}